\newcommand{\R}{\mathbb{R}}
\newcommand{\Z}{\mathbb{Z}}
\newtheorem{theorem}{Theorem}[section]
\newtheorem{lemma}{Lemma}[section]
\theoremstyle{definition}
\theoremstyle{remark}
\newtheorem*{remark}{Remark}
\providecommand{\keywords}[1]
{
  \small	
  \textbf{\textit{Keywords---}} #1
}
\begin{document}
\title{A pseudo-spectral splitting method for linear dispersive problems with transparent boundary conditions}
\author{Lukas Einkemmer\fnref{fn1}}
\ead{lukas.einkemmer@uibk.ac.at}
\author{Alexander Ostermann\fnref{fn1}}
\ead{alexander.ostermann@uibk.ac.at}
\author{Mirko Residori\fnref{fn1}}
\ead{mirko.residori@uibk.ac.at}
\fntext[fn1]{Department of Mathematics, University of Innsbruck \protect}
\begin{abstract}
The goal of the present work is to solve a linear dispersive equation
with variable coefficient advection on an unbounded domain. In this
setting, transparent boundary conditions are vital to allow
waves to leave (or even re-enter) the, necessarily finite,
computational domain. To obtain an efficient numerical scheme we
discretize space using a spectral method. This allows us to
drastically reduce the number of grid points required for a given
accuracy. Applying a fully implicit time integrator, however, would
require us to invert full matrices. This is addressed by performing an
operator splitting scheme and only treating the third order
differential operator, stemming from the dispersive part, implicitly;
this approach can also be interpreted as an implicit-explicit
scheme. However, the fact that the transparent boundary conditions are
non-homogeneous and depend implicitly on the numerical solution
presents a significant obstacle for the splitting/pseudo-spectral
approach investigated here. We show how to overcome these difficulties
and demonstrate the proposed numerical scheme by performing a number
of numerical simulations.
\end{abstract}
\maketitle

\keywords{splitting methods, linear dispersive problems, pseudo-spectral methods, transparent boundary conditions}

\section{Introduction}
In this paper, we consider a linear dispersive problem with a variable coefficient advection in one space dimension 
\begin{equation}
\label{eqo}
u_t + g(x)u_x + u_{xxx} = h(t,x),\quad t\in [0,T],\,x\in\R.
\end{equation}
This partial differential equation (PDE) consists of an advection part given by $g(x)u_x$ and a dispersive part given by $u_{xxx}$. Applications can be found, for example,~in modelling long waves in shallow water with an uneven bottom (see, for example, \cite{kakutani71,whitham74}). 

We face several difficulties in designing an efficient numerical method to solve this problem. The third order differential operator causes explicit methods to take excessively small time steps. This could be remedied by employing an implicit method, such as the Crank--Nicolson scheme that is used, for example,  in \cite{besse16}. In this paper, our goal is to use a spectral approach to discretize space. This drastically reduces the number of grid points needed. However, the variable coefficient advection results in a full matrix. Thus, applying an implicit method to the entire problem is very costly in terms of computational effort. We, therefore, choose to employ a splitting approach to separate the dispersive part, which will be treated implicitly, from the advection part, which will be treated explicitly. Operator splitting schemes have been employed for dispersive problems before and we refer the reader to the literature \cite{einkemmer15,einkemmer18,holden11,klein11,rouhi95}. Finally, problem \eqref{eqo} is posed on an unbounded domain. However, to perform numerical simulations the domain has to be restricted to a compact subset of the real line. Our goal is to derive a numerical scheme that, despite this truncation, retains the dynamics of the original problem. This is accomplished using so-called transparent boundary conditions (TBCs). Transparent boundary conditions are derived in such a manner that a wave propagating to either the left or the right can leave the computational domain without producing reflections. 
A previous work which combines splitting scheme with absorbing boundary conditions for the Schr\"odinger equation can be found in \cite{bertoli17}.
We further note that combining TBCs with a pseudo-spectral approach is not entirely straightforward as TBCs can not be formulated as Dirichlet or Neumann boundary conditions and the value imposed depends on the history of the numerical solution at the boundary.

This work is inspired by the paper of Besse et al.~\cite{besse16}. In their work the $\mathcal{Z}\text{-transform}$ is used to compute transparent boundary conditions for a third-order linear problem with constant coefficients. The temporal discretization is carried out by the Crank--Nicolson scheme and the spatial discretization by finite differences. Differently from their approach we will follow a splitting strategy in order to  divide the full equation into its dispersive part $u_t + u_{xxx}=0$ and its advection part $u_t+g(x)u_x=0$. The transparent boundary conditions are then derived in a semi-discrete setting using the backward Euler and the explicit Euler method for the dispersive and the advection equation, respectively. Let us also remark that, since we limit ourselves to the first order Lie--Trotter splitting in this work, the resulting numerical time integrator can also be written as an implicit-explicit (IMEX) scheme. The space discretization is performed by a pseudo-spectral approach similarly to what is described in \cite{shen04,zheng08}. This results in super-polynomial convergence in space. Consequently very accurate numerical solutions are achieved by using only a small number of grid points. 

Due to the non-locality in time (in 1-D) and space (2-D or higher dimensions), transparent boundary conditions are expensive to compute. While it is possible to employ them in 1-D, the 2-D case becomes impracticable and one needs an approximation of these conditions, the so called \emph{absorbing} boundary conditions (ABCs). In recent years a lot of work has been done for the Schr\"odinger equation coupled with transparent boundary conditions, see \cite{antoine08, arnold03}. For third order problems less literature is available. We refer the reader to \cite{besse16,besse16a,zheng08}. 

The paper is organized as follows. In Section 2 we derive a semi-discrete scheme, discrete in time and continuous in space, by applying the Lie--Trotter splitting. In Section 3 we impose transparent boundary conditions for the scheme derived in Section 2. In particular, we determine the proper values of the numerical solution at the boundaries with the help of the $\mathcal{Z}$-transform. In Section 4 we describe a pseudo-spectral method for the spatial discretization which takes the TBCs into account. Finally, in Section 5 we present some numerical results that illustrate the theoretical findings. 

\section{Time discretization: a splitting approach}

In this section we derive a semi-discrete scheme by applying Lie--Trotter splitting to~\eqref{eqo}. Splitting methods give us the possibility to choose different numerical methods in order to solve the advection part and the dispersive part.  
For the dispersive part it is convenient to employ an implicit time integrator. Indeed, the third derivative in space makes the Courant--Friedrichs--Lewy (CFL) condition very strict for any explicit scheme. In this work we choose the implicit Euler scheme for the dispersive part. For the advection part we use a different time integration scheme. A possible choice is the explicit Euler method, since the CFL condition given by the advection term is generally not prohibitive. 
Both the implicit and the explicit Euler method converges with order one. Numerical methods of order one are a sufficient choice in our case because of the bottleneck given by the Lie--Trotter splitting, which converges with order one only. 

We consider problem \eqref{eqo} supplemented with initial data $u_0$ 
\begin{equation}
\label{eq0}
\begin{cases}
u_t + g(x) u_x + u_{xxx} = h(t,x),\quad (t,x)\in[0,T]\times\R,\\
u(0,x) = u_0(x).\\
\end{cases}
\end{equation}
The initial data $u_0$ and the source term $h$ are assumed to be compactly supported.  The coefficient $g(x)$ is constant outside a bounded interval. For simplicity of exposition we assume $h=0$. Let $u^m(x)$ be a numerical approximation to the exact solution $u(t,x)$ of \eqref{eq0} at time $t=t_m=m\tau$. We split problem \eqref{eq0} into two sub-problems:
\vspace{2mm}

\begin{minipage}{0.45\textwidth}
\begin{equation}
\label{eq1}
\begin{cases}
v_t + g(x) v_x = 0,\\
v(0,x) = v_0(x),\\
\end{cases}
\end{equation}
\end{minipage}
\begin{minipage}{0.45\textwidth}
\begin{equation}
\label{eq2}
\begin{cases}
w_t + w_{xxx} = 0,\\
w(0,x) = w_0(x).\\
\end{cases}
\end{equation}
\end{minipage}

\vspace{2mm}
Let us denote by  $\Psi^{\tau}(v_0)$ and $\Phi^{\tau}(w_0)$ the flows of \eqref{eq1} and \eqref{eq2}, respectively. We approximate the solution $u(t,x)$ at time $t=t_{m+1}$ starting from $u^m(x)~\approx~u(t_m,x)$ by applying a Lie--Trotter splitting, i.e.
\[
u^{m+1}(x) = \Phi^{\tau} \circ \Psi^{\tau} \big(u^{m}\big)(x).
\]
The semi-discrete numerical schemes for \eqref{eq1} and \eqref{eq2}, respectively, take the form
\begin{equation}
v^{m+1/2} + \tau g\, u^{m}_x = u^m,\quad u^{m+1} + \tau u^{m+1}_{xxx} = v^{m+1/2},\quad m\geq 0.\\
\end{equation}
Composing the two flows by Lie--Trotter splitting we obtain the numerical scheme
\begin{equation}
\label{eq4}
u^{m+1} + \tau u^{m+1}_{xxx} = v^{m+1/2} = u^{m} - \tau g(x)\, u_x^{m},\quad u^0(x) = u(0,x).
\end{equation}

\begin{remark}
In this work the order of composition of the flows $\Psi^{\tau}$ and $\Phi^{\tau}$ is important. When we exchange the flow's composition we obtain 
\[
u^{m+1}(x) = \Psi^{\tau} \circ \Phi^{\tau} \big(u^{m}\big)(x).
\]
The numerical solution for $\Phi^{\tau}\big(u^m\big)$ is given by implicit Euler, we have
\begin{equation}
\label{eqmm1}
v^{m+1/2} + \tau v^{m+1/2}_{xxx} = u^{m}.
\end{equation}
The numerical solution for $\Psi^{\tau}\big(v^{m+1/2}\big)$ is given by explicit Euler, we have
\begin{equation}
\label{eqm1}
u^{m+1} = v^{m+1/2}-\tau g v_x^{m+1/2}.
\end{equation}
It is not possible from \eqref{eqmm1}-\eqref{eqm1} to obtain a simple formula which relates $u^{m+1}$ to $u^m$ as in \eqref{eq4}. Therefore, the numerical scheme in \eqref{eq4} is easier to handle.
\end{remark}
\section{Transparent boundary conditions}\label{tbc}
Transparent boundary conditions are non-local in time. Therefore, they cannot be computed separately for the two sub-problems of the Lie--Trotter splitting. The idea is first to derive the numerical scheme \eqref{eq4} and then to compute the boundary conditions for that particular scheme. The drawback of this approach is that different time discretizations require different boundary conditions. On the other hand, the computed boundary conditions produce (theoretically) no reflections since they are perfectly designed for the employed numerical scheme. 

The mathematical tool we use to derive the transparent boundary conditions is the $\mathcal{Z}$-transform. The $\mathcal{Z}$-transform requires an \emph{equidistant} time discretization. 
Further, we assume that 
\begin{equation}
\label{eqm2}
u^m(x)\to 0,\, |x|\to\infty,\quad \text{for every } m\geq0.
\end{equation}
We refer to \eqref{eqm2} as the \emph{decay condition}. 
Given a sequence $\mathbf{u}=\{u^l\}_{l\geq0}$ its $\mathcal{Z}$--transform is defined by
\[
\hat{u}(z):=\mathcal{Z}(\mathbf{u})(z) = \sum_{l=0}^{\infty} z^{-l} u^l,\quad z\in\mathbb{C},\, |z|>\rho\geq 1,
\]
where $\rho$ is the radius of convergence of the series.
We recall the main properties of the $\mathcal{Z}$-transform that are used in the following. 
\begin{itemize}
\item \emph{Linearity}: for $\alpha,\beta\in\mathbb{R}$, $\mathcal{Z}(\alpha\mathbf{u}+\beta\mathbf{v})(z) = \alpha\hat{u}(z)+ \beta\hat{v}(z)$;
\item \emph{Time advance}: for $k>0$, $\mathcal{Z}(\{u^{l+k}\}_{l\geq0})(z) = z^k\hat{u}(z)-z^k\sum_{l=0}^{k-1}z^{-l}u^l$;
\item \emph{Convolution}: $\mathcal{Z}\big(\mathbf{u} *_d \mathbf{v}\big)(z) = \hat{u}(z)\hat{v}(z)$;
\end{itemize}
where $*_d$ denotes the discrete convolution
\[
(\mathbf{u} *_d \mathbf{v})^m:= \sum_{j=0}^m u^jv^{m-j},\quad m\geq 0.
\] For more details and properties about the $\mathcal{Z}$-transform we refer the reader to \cite{besse16}.

Now we determine the transparent boundary conditions for the scheme \eqref{eq4}. For this we assume that the coefficient $g(x)$ is constant for $x\leq\alpha$ and $x\geq\beta$ and that $u^0$ is continuous and its support is contained in $[\alpha,\beta]$. Without loss of generality, we further assume $\alpha=-1$ and $\beta=1$. We then split the domain into three parts 
\[
\mathbb{R}=(-\infty,-1]\cup(-1,1)\cup [1,\infty).
\]
Let $\Omega_-=(-\infty,-1]$ and $\Omega_+=~[1,\infty)$ be the two outer domains. 
Note that $u^{0}(x)~=0$ for $x\in \Omega_-\cup\Omega_+$. We now consider the problem in the two outer domains. We denote by $g_{\pm}$ the constant values that $g$ assumes for $x\in \Omega_-$ and $x\in \Omega_+$, respectively. 
Computing the $\mathcal{Z}$-transform of the sequence $\{u^m(x)\}_{m\geq 0}$, where $x\in\Omega_-\cup\Omega_+$ plays the role of a parameter, we get 
\[
\mathcal{Z}(\{u^{m+1}(x)\}_{m\geq0})(z)= z\hat{u}(x,z)-zu^0(x) = z\hat{u}(x,z),\quad x\in\Omega_-\cup\Omega_+,
\]
where we used the time advance property of the $\mathcal{Z}$-transform and the assumption of a supported $u^0(x)$ in $\Omega$. 
By applying the $\mathcal{Z}$-transform to \eqref{eq4}, we obtain an ordinary differential equation in the variable $x$:
\begin{equation}
\label{eq5}
z (\hat{u} + \tau\hat{u}_{xxx}) = \hat{u} - \tau g_{\pm}\,\hat{u}_x,\quad x\in \Omega_-\cup\Omega_+. 
\end{equation}
The solution of this equation for $x\in \Omega$ is obtained by employing an exponential ansatz. It is given by
\begin{equation}
\label{eq6}
\hat{u}(x,z) = \sum_{j=1}^3 c_j(z) e^{r_j(z)x},
\end{equation}
where $c_j$ are coefficients and $r_j$, $j=1,2,3$ are the roots of the characteristic polynomial associated to \eqref{eq5}, i.e. 
\begin{equation}
\label{cpol}
z\tau r^3 + \tau g_{\pm}\, r + z-1 = 0.  
\end{equation}
The roots $r_j$ can be computed analytically. They are given in \cite{besse16} by
\[
\begin{split}
& r_j(z) = \omega^{j-1}\,\zeta(z)-\frac{g_{\pm}}{3z\omega^{j-1}\,\zeta(z)},\quad \omega = e^{i2/3\pi},\quad j=1,2,3,\\
& \zeta (z) = -\bigg(\frac{G(z)}{2}\bigg)^{1/3},\\
& G(z) = \frac{z-1}{z\tau} + \sqrt{\bigg(\frac{z-1}{z\tau}\bigg)^2+\frac{4}{27}\bigg(\frac{g_{\pm}}{z}\bigg)^3}.
\end{split}
\]
In the same paper \cite{besse16} the following property is shown:
\begin{theorem}
The roots $r_j(z)$ for $j\in\{1,2,3\}$ are such that $\mathrm{Re}\,r_1(z)<0$ and $\mathrm{Re}\,r_{2,3}(z)>1$, $z=\rho \mathrm{e}^{i\theta}$, $\theta\in[0,2\pi)$ and for a fixed $\rho>1$. 
\end{theorem}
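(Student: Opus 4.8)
The plan is to avoid grinding through the explicit Cardano formula and instead exploit the structure of the depressed cubic. Dividing the characteristic polynomial \eqref{cpol} by $z\tau$ puts it in the form $r^3 + p\,r + q = 0$ with $p = g_\pm/z$ and $q = (z-1)/(z\tau)$; the absence of an $r^2$ term already forces $r_1 + r_2 + r_3 = 0$, so the three real parts sum to zero and cannot all share a sign. The headline observation is that the constant term $q$ is confined to the open right half-plane whenever $|z| = \rho > 1$: since $q = \tau^{-1}(1 - 1/z)$ and $\mathrm{Re}(1 - 1/z) = 1 - \mathrm{Re}(z)/\rho^2 \ge 1 - 1/\rho > 0$, one gets $\mathrm{Re}\,q \ge \tau^{-1}(1 - 1/\rho) > 0$ uniformly in $\theta$. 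Equivalently, $\arg q$ stays bounded away from $\pm\pi/2$ by an angle $\delta(\rho) > 0$, and it is this single fact that pins down the geometry of the roots.

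First I would treat the advection term $p\,r$ as a perturbation, so that to leading order the roots solve $r^3 = -q$, i.e.\ they are the three cube roots of $-q$. Because $\mathrm{Re}\,q > 0$ we have $\arg(-q) \in (\pi/2, 3\pi/2)$, so the principal cube root has argument in $(\pi/6, \pi/2)$ and the three roots have arguments clustered near this value and its rotations by $\pm 2\pi/3$. Inspecting the cosines then shows that exactly one direction falls in the wedge around $\pi$ (negative real part) while the other two fall in the wedges around $\pm\pi/3$ (positive real part); and because $\arg q$ stays $\delta(\rho)$-away from $\pm\pi/2$, all three real parts are bounded away from zero, uniformly in $\theta$. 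This is the mechanism producing the claimed one-left, two-right split.

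The threshold value $1$ (rather than merely $0$) then comes from the modulus of the roots. One has $|r_j| \approx |q|^{1/3} \ge (\mathrm{Re}\,q)^{1/3} = O(\tau^{-1/3})$, so the roots grow without bound as $\tau \to 0$; combined with the uniform angular separation from the imaginary axis, the two right-half-plane roots satisfy $\mathrm{Re}\,r_{2,3} > 1$ and the remaining one $\mathrm{Re}\,r_1 < 0$ (in fact $< -2$, consistent with the zero-sum relation) once $\tau$ is small relative to $\rho - 1$. I would make this rigorous either by inserting the explicit expressions for $\zeta(z)$ and $r_j(z)$ into $\mathrm{Re}\,r_j$ and estimating, using $\sqrt{q^2 + \tfrac{4}{27}p^3} = q\,(1 + O(\tau^{2/3}))$ on the correct branch so that $\zeta = -q^{1/3}(1 + O(\tau^{2/3}))$, or, more robustly, by an argument-principle count of the zeros of \eqref{cpol} in the half-planes $\{\mathrm{Re}\,r < 0\}$ and $\{\mathrm{Re}\,r > 1\}$ as $z$ traverses the circle $|z| = \rho$.

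The hard part will be the uniform control of the perturbation and of the branch of the cube root. Concretely, one must verify that the advection correction $p\,r$ — whose relative size at a root is $|p|/|r|^2 = O(\tau^{2/3})$ — together with the square-root branch in $G(z)$ never pushes a root across the lines $\mathrm{Re}\,r = 0$ or $\mathrm{Re}\,r = 1$, and that this holds for every $\theta \in [0,2\pi)$ simultaneously. This is also where the precise regime enters: a purely imaginary root $r = i\xi$ would force $|z|^2 = (1 + \tau^2 g_\pm^2\xi^2)/(1 + \tau^2\xi^6)$, whose supremum over $\xi$ is $1 + O(\tau^2)$, so the strict separation from the imaginary axis — and hence the sharp form of the theorem with threshold $1$ — genuinely requires $\tau$ small for the given fixed $\rho > 1$. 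Making the $O(\tau^{2/3})$ estimate uniform in $\theta$ and tracking the implied smallness condition on $\tau$ is the main technical obstacle.
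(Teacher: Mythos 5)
The paper does not actually prove this statement: it is quoted from \cite{besse16}, so there is no in-paper argument to compare yours against. It is worth noting that the polynomial analysed in \cite{besse16} arises from Crank--Nicolson applied to the unsplit equation, so its linear coefficient is a \emph{real} constant; in that situation a purely imaginary root $r=i\xi$ would force the constant term $q$ to be purely imaginary, contradicting $\mathrm{Re}\,q>0$, and the one-left/two-right count then follows by continuation from $z=\infty$. For the polynomial \eqref{cpol} of the present IMEX scheme the linear coefficient is $g_\pm/z$, which is not real, and that clean argument breaks down exactly at the point your last paragraph identifies. Your computation $|z|^2=(1+\tau^2g_\pm^2\xi^2)/(1+\tau^2\xi^6)$ is correct: for $g_\pm\neq0$ it produces, for every $\rho$ with $1<\rho^2\leq 1+O(\tau^2)$, an angle $\theta$ at which a root lies exactly on the imaginary axis, so the claim $\mathrm{Re}\,r_1<0<\mathrm{Re}\,r_{2,3}$ fails there. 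Likewise, in the limit $z\to\infty$ the roots tend to the cube roots of $-1/\tau$, giving $\mathrm{Re}\,r_{2,3}=\tfrac12\tau^{-1/3}$, which exceeds $1$ only for $\tau<1/8$. The theorem as stated (arbitrary fixed $\rho>1$, no restriction on $\tau$) therefore cannot hold literally; it needs a smallness condition on $\tau$ relative to $\rho-1$, and the threshold $1$ is presumably a misprint for $0$, which is all the paper actually uses when discarding the growing modes in \eqref{eq7}.

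As a proof sketch under such a smallness assumption, your strategy is sound: $\mathrm{Re}\,q\geq\tau^{-1}(1-1/\rho)>0$ combined with $|q|\leq\tau^{-1}(1+1/\rho)$ does bound $\arg q$ away from $\pm\pi/2$ uniformly in $\theta$; the cube roots of $-q$ then sit in three wedges yielding one root with negative and two with positive real part, each of magnitude $O(\tau^{-1/3})$; and the relative size $|p|/|r|^2=O(\tau^{2/3})$ of the advection perturbation is the right quantity to control. The genuine gap is that the uniform-in-$\theta$ perturbation bound is asserted rather than proved: you never show that the perturbed roots stay inside (slightly enlarged) wedges for every $\theta$ simultaneously, and tracking the branch of $\zeta(z)$ in the explicit formulae is delicate. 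The cleanest way to close this is not to chase the Cardano expressions but to apply Rouch\'e's theorem comparing $r^3+pr+q$ with $r^3+q$ on the boundaries of the wedges rescaled by $|q|^{1/3}$ (or an argument-principle count along $\mathrm{Re}\,r=0$ and $\mathrm{Re}\,r=1$), which converts your $O(\tau^{2/3})$ heuristic into explicit constants and makes the required restriction on $\tau$ quantitative.
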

The solution \eqref{eq6} must be coupled with the \emph{decay condition} $u^m(x)\to 0,\, |x|\to~\infty$. Therefore,
we choose the coefficients $c_j(z)$ such that the decay condition is satisfied. This requires $c_1(z) = 0$ for $x\in\Omega_-$. Otherwise the solution would grow to infinity as $\mathrm{Re}\,r_1(z)<0$. Similarly for $x\in\Omega_+$ we impose $c_{2}(z)=c_{3}(z) = 0$. Otherwise the solution would grow to infinity as $\mathrm{Re}\,r_{2,3}(z)>0$.  Finally we obtain 
\begin{equation}
\label{eq7}
\begin{split}
& \hat{u}(x,z) = c_1(z) e^{r_1(z)x},\quad x\in\Omega_+,\\
& \hat{u}(x,z) = c_2(z) e^{r_2(z)x} + c_3(z) e^{r_3(z)x},\quad x\in\Omega_-.\\
\end{split}
\end{equation}

The coefficients $c_j$ are not yet determined. Notice that differentiating the equation \eqref{eq7} w.r.t. $x$ we obtain
\begin{equation}
\label{eq8}
\begin{split}
& \hat{u}_x(x,z) = c_1(z) r_1(z)\, e^{r_1(z) x} = r_1(z)\hat{u}(x,z),\quad x\in\Omega_+,\\
& \hat{u}_x(x,z) = c_2(z) r_2(z)\,e^{r_2(z) x} + c_3(z) r_3(z) e^{r_3(z) x},\quad x\in\Omega_-
\end{split}
\end{equation}
and differentiating once more we get
\begin{equation}
\label{eq9}
\begin{split}
&\hat{u}_{xx}(x,z) = r_1^2(z)\, \hat{u}(x,z),\quad x\in\Omega_+,\\
&\hat{u}_{xx}(x,z) = c_2(z) r_2^2(z)\, e^{r_2(z) x} + c_3(z)r_3^2(z)\, e^{r_3(z) x},\quad x\in\Omega_-.
\end{split}
\end{equation}
Rewriting formulae \eqref{eq7}--\eqref{eq9}, we can express the function $\hat{u}$ of \eqref{eq5} and $\hat{u}_x$ for $x\in\Omega_+$  in terms of its second derivative 
\[
\hat{u}(x,z) = \frac{1}{r_1^2(z)}\,\hat{u}_{xx}(x,z),\quad \hat{u}_{x}(x,z) = \frac{1}{r_1(z)}\,\hat{u}_{xx}(x,z),\quad x\in\Omega_+.
\]
The function $\hat{u}$  for $x\in\Omega_-$ can be expressed in terms of its first and second derivative
\[
\hat{u}(x,z) = \bigg(\frac{1}{r_2(z)}+\frac{1}{r_3(z)}\bigg)\,\hat{u}_x(x,z) - \frac{1}{r_2(z)r_3(z)}\,\hat{u}_{xx}(x,z),\quad x\in\Omega_-.
\]
The transparent boundary conditions in the $\mathcal{Z}$-transformed space are simply computed by evaluating $\hat{u}$ at $x=\pm 1$ and $\hat{u}_x$ at $x=1$, namely
\begin{equation}
\label{eq10}
\begin{split}
 \hat{u}(-1,z) &= \bigg(\frac{1}{r_2(z)}+\frac{1}{r_3(z)}\bigg)\hat{u}_x(-1,z) - \frac{1}{r_2(z)r_3(z)}\hat{u}_{xx}(-1,z),\\
 \hat{u}(1,z) &= \frac{1}{r_1^2(z)}\hat{u}_{xx}(1,z),\\
 \hat{u}_x(1,z) &= \frac{1}{r_1(z)}\hat{u}_{xx}(1,z).
\end{split}
\end{equation}
The solution in the physical space is then obtained by applying the inverse $\mathcal{Z}\text{-transform}$ to \eqref{eq10}. 
From \eqref{eq10} we get
\begin{equation}
\label{eqt}
\begin{split}
 \mathbf{u}(-1) &= \mathcal{Z}^{-1}\bigg\{z\mapsto\bigg(\frac{1}{r_2(z)}+\frac{1}{r_3(z)}\bigg)\hat{u}_x(-1,z)\bigg\} - \mathcal{Z}^{-1}\bigg\{z\mapsto\frac{1}{r_2(z)r_3(z)}\hat{u}_{xx}(-1,z)\bigg\},\\
 \mathbf{u}(1) &= \mathcal{Z}^{-1}\bigg\{z\mapsto\frac{1}{r_1^2(z)}\hat{u}_{xx}(1,z)\bigg\},\\
 \mathbf{u}_x(1) &= \mathcal{Z}^{-1}\bigg\{z\mapsto\frac{1}{r_1(z)}\hat{u}_{xx}(1,z)\bigg\}.
\end{split}
\end{equation}
By the convolution property of the $\mathcal{Z}$-transform, we have $\mathbf{u}*_d\mathbf{v} = \mathcal{Z}^{-1}\{z\mapsto\hat{u}(z)\hat{v}(z)\}$. Let us denote by
 \[
\begin{split}
\mathbf{Y}_1 &= \mathcal{Z}^{-1}\bigg\{z\mapsto\frac{1}{r_2(z)}+\frac{1}{r_3(z)}\bigg\},\\
\mathbf{Y}_2 &= \mathcal{Z}^{-1}\bigg\{z\mapsto-\frac{1}{r_2(z)r_3(z)}\bigg\},\\
\mathbf{Y}_3 &= \mathcal{Z}^{-1}\bigg\{z\mapsto\frac{1}{r_1^2(z)}\bigg\},\quad \mathbf{Y}_4 = \mathcal{Z}^{-1}\bigg\{z\mapsto\frac{1}{r_1(z)}\bigg\}.
\end{split}
\]
Equations \eqref{eqt} take the form
\[
\begin{split}
\mathbf{u}(-1) &= \mathbf{Y}_1 *_d \mathbf{u}_{x}(-1) + \mathbf{Y}_2 *_d \mathbf{u}_{xx}(-1),\\
\mathbf{u}(1) &= \mathbf{Y}_3 *_d \mathbf{u}_{xx}(1),\\
\mathbf{u}_x(1) &= \mathbf{Y}_4 *_d \mathbf{u}_{xx}(1).
\end{split}
\]
We rewrite the boundary conditions in order to highlight the part that depends on $m$ (collected on the left hand-side) and the \emph{history} that depends on previous time steps (collected on the right-hand side)
\begin{equation}
\label{eq15}
\begin{split}
u^m(-1) - Y_1^0\,u^m_{x}(-1) - Y_2^0u^m_{xx}(-1) & = \sum_{j=1}^m \big(Y_1^j\, u^{m-j}_{x}(-1) + Y_2^j\, u^{m-j}_{xx}(-1)\big) =: h_1^m,\\
u^m(1) - Y_3^0\,u_{xx}^m(1) &= \sum_{j=1}^m Y_3^j\,u^{m-j}_{xx}(1)=:h_2^m,\\
u_x^m(1) - Y_4^0\,u_{xx}^m(1) &= \sum_{j=1}^m Y_4^j\, u_{xx}^{m-j}(1)=:h_3^m.
\end{split}
\end{equation}
The quantities $\mathbf{Y}_k$, $k=1,\dots, 4$ can be computed numerically as we briefly describe in Section \ref{sec4}. For more details about the $\mathcal{Z}$-transform and its numerical implementation as well as the inverse $\mathcal{Z}$-transform, we refer the reader to \cite{besse16,zisowsky03}.

Summing up, we obtain the following boundary value problem to solve:
\begin{equation}
\label{eqibvp}
\begin{cases}
& u^{m} + \tau u^{m}_{xxx} = u^{m-1} - \tau g(x)\, u_x^{m-1}, \quad x\in(-1,1),\\
& u^{m}(-1) - Y_1^0\,u^{m}_{x}(-1) - Y_2^0u^{m}_{xx}(-1) = h_1^{m},\\
& u^{m}(1) - Y_3^0\,u_{xx}^{m}(1) = h_2^{m},\\
& u_x^{m}(1) - Y_4^0\,u_{xx}^{m}(1) = h_3^{m},\\
& u^0 = u(0,x).
\end{cases}
\end{equation}
The task of solving this problem will be carried out in the next sections.
\section{Pseudo-spectral space discretization}
\label{secPseudo}
The space discretization of problem \eqref{eqibvp} is carried out by a pseudo-spectral method. We implement a dual-Petrov--Galerkin method. This method has been employed for the pure dispersive equation in \cite{zheng08}. 
We remark that other spatial discretizations are also possible. For example, in \cite{besse16} a finite difference method is employed to discretize in space. Finite differences are easy to implement, but an accurate numerical solution is achieved only when a fairly large amount of grid points are used. On the other hand, pseudo-spectral methods require a modest number of points and provide very accurate solutions. These methods have to be carefully designed in order to get banded spatial discretization matrices so that the resulting linear system is cheap to solve.
 
Let $a<b$ and let $\mathcal{P}_N$ be the space of polynomials of degree less or equal than $N$ on the interval $[a,b]$. We define
\begin{equation}
\label{eq19}
\begin{split}
V_N = \{u\in\mathcal{P}_N\, |\,  u(a) - Y_1^0 u_x(a) - Y_2^0 u_{xx}(a) & = 0,\\
u(b) - Y_3^0 u_{xx}(b) & = 0,\\
u_x(b)-Y_4^0 u_{xx}(b) & = 0\},
\end{split}
\end{equation}
where $Y^0_k$, $k=1,\dots,4$ are the coefficients described in Section \ref{tbc}.

Let $(u,v)=\int_a^b uv\, \text{d}x$ be the usual $L_2$ inner product. $V_N^*$ is defined as the \emph{dual} space of $V_N$ so that for every $u\in V_N$ it holds $(u_{xxx},v) = -(u,v_{xxx})$ for every $v\in V_N^*$.

\begin{lemma}
The \emph{dual} space $V_N^*$ of $V_N$ is given by 
\[
\begin{split}
V_N^* = \{v\in\mathcal{P}_N\, |\,  v(b) - Y_4^0v_x(b) + Y_3^0v_{xx}(b) & = 0,\\
v(a) + Y_2^0v_{xx}(a) & = 0,\\
v_x(a) - Y_1^0v_{xx}(a) & = 0\}.
\end{split}
\]
\end{lemma}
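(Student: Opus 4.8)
The plan is to reduce the claimed characterization to a single integration-by-parts identity and then match boundary terms against the constraints cutting out $V_N$. First I would integrate by parts three times over $[a,b]$ to obtain
\[
(u_{xxx}, v) = \big[\, u_{xx} v - u_x v_x + u\, v_{xx} \,\big]_a^b - (u, v_{xxx}).
\]
Hence the defining relation $(u_{xxx}, v) = -(u, v_{xxx})$ holds for a given pair exactly when the boundary functional
\[
B(u,v) := \big(u_{xx} v - u_x v_x + u v_{xx}\big)\big|_{x=b} - \big(u_{xx} v - u_x v_x + u v_{xx}\big)\big|_{x=a}
\]
vanishes. By definition $V_N^*$ then consists precisely of those $v\in\mathcal{P}_N$ with $B(u,v)=0$ for every $u\in V_N$, and the task becomes to identify this set explicitly.

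Next I would use the three constraints defining $V_N$ to eliminate the dependent boundary data of $u$. Substituting $u(b)=Y_3^0 u_{xx}(b)$, $u_x(b)=Y_4^0 u_{xx}(b)$ and $u(a)=Y_1^0 u_x(a)+Y_2^0 u_{xx}(a)$ and collecting terms, $B(u,v)$ rearranges into
\[
B(u,v) = u_{xx}(b)\big[v(b)-Y_4^0 v_x(b)+Y_3^0 v_{xx}(b)\big] - u_{xx}(a)\big[v(a)+Y_2^0 v_{xx}(a)\big] + u_x(a)\big[v_x(a)-Y_1^0 v_{xx}(a)\big].
\]
The three bracketed quantities in $v$ are exactly the left-hand sides of the three conditions defining the claimed $V_N^*$. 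This already delivers one inclusion: if $v$ satisfies those three conditions then every bracket vanishes, so $B(u,v)=0$ for all $u\in V_N$ and thus $v$ lies in the dual space.

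For the reverse inclusion I would argue that the three linear functionals $u\mapsto u_{xx}(b)$, $u\mapsto u_{xx}(a)$, $u\mapsto u_x(a)$ are jointly surjective from $V_N$ onto $\R^3$. Granting this, $B(u,v)=0$ for all $u\in V_N$ forces each of the three $v$-brackets to vanish separately — by choosing $u\in V_N$ realizing each standard basis vector of the triple $(u_{xx}(b),u_{xx}(a),u_x(a))$ — which yields exactly the defining conditions of the claimed dual space. I expect this surjectivity to be the main obstacle: one must verify that, after imposing the three constraints that cut $V_N$ out of $\mathcal{P}_N$, the values $u_{xx}(b)$, $u_{xx}(a)$, $u_x(a)$ can still be prescribed independently. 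For $N$ large enough that $\dim V_N = N-2\ge 3$, I would establish this either by a rank count showing the six functionals (three constraints plus three evaluations) are linearly independent on $\mathcal{P}_N$, or by explicitly exhibiting polynomials in $V_N$ that realize each basis vector of $\R^3$.
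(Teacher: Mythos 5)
Your proposal is correct and follows essentially the same route as the paper: integrate $(u_{xxx},v)$ by parts three times, substitute the $V_N$ constraints to eliminate $u(b)$, $u_x(b)$, $u(a)$, and factor the boundary terms as $u_{xx}(b)$, $u_{xx}(a)$, $u_x(a)$ times the three brackets that define $V_N^*$. The one genuine addition is your treatment of the reverse inclusion via joint surjectivity of $u\mapsto\big(u_{xx}(b),u_{xx}(a),u_x(a)\big)$ on $V_N$; the paper stops after exhibiting the factorization and leaves this direction implicit, so your version is the more complete one, and the surjectivity you flag as the remaining obstacle is a routine rank count (six independent linear functionals on $\mathcal{P}_N$ for $N\geq 5$).
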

\begin{proof}
Integrating the quantity $(u_{xxx},v)$ by parts three times   we get
\begin{equation}
\label{eql1}
\begin{split}
(u_{xxx},v) & = u_{xx}\cdot v\Big|_{x=a}^b - (u_{xx}, v_x) = u_{xx}\cdot v\Big|_{x=a}^b -  u_x\cdot  v_x\Big|_{x=a}^b + (u_x,v_{xx}) \\
& = u_{xx}\cdot v\Big|_{x=a}^b -  u_x\cdot  v_x\Big|_{x=a}^b + u\cdot v_{xx}\Big|_{x=a}^b - (u, v_{xxx}).
\end{split}
\end{equation}
The boundary terms in \eqref{eql1} must vanish. For $x=b$ we have
\begin{equation}
\label{eql2}
u_{xx}(b)\cdot v(b) - u_x(b)\cdot v_x(b) + u(b)\cdot v_{xx}(b) =  u_{xx}(b)\cdot \big(v(b) - Y_4^0 v_x(b) + Y_3^0 v_{xx}(b)\big).
\end{equation}
The last equality is obtained substituting in place of $u_x(b)$ and $u(b)$ the relations with $u_{xx}(b)$ given by the space $V_N$.
Similarly for $x=a$ we have
\begin{equation}
\label{eql3}
\begin{split}
&-u_{xx}(a)\cdot v(a) +  u_x(a)\cdot v_x(a) - u(a)\cdot v_{xx}(a) \\
=& -u_{xx}(a)\cdot v(a) + u_x(a)\cdot v_x(a) - (Y_1^0 u_x(a)+Y_2^0 u_{xx}(a))\cdot v_{xx}(a)  \\
=& -u_{xx}(a)\big(v(a)+Y_2^0 v_{xx}(a)\big) + u_x(a)\big(v(a)-Y_1^0 v_{xx}(a)\big).
\end{split}
\end{equation}
From equations \eqref{eql2} and \eqref{eql3} we get the boundary relations for the dual space $V_N^*$.
\end{proof}

Let $w^m\in V_N$ be the solution of the homogeneous problem associated to  \eqref{eqibvp}, i.e.~where we set $h_i^m = 0$ for $i=1,2,3$. The solution $u^m$ of the inhomogeneous problem is then given by $u^m = w^m + p_2^m$, where $p_2^m$ is the unique polynomial of degree $2$ such that
\[
\begin{split}
& p_2^{m}(-1) - Y_1^0\,p^{m}_{2,x}(-1) - Y_2^0p^{m}_{2,xx}(-1) = h_1^m,\\
& p_2^{m}(1) - Y_3^0\,p_{2,xx}^{m}(1) = h_2^m,\\
& p_{2,x}^{m}(1) - Y_4^0\,p_{2,xx}^{m}(1) = h_3^m.\\
\end{split}
\]

We remark that the polynomial $p_2^m$ depends on $h_i^m$, $i=1,2,3$. Therefore, it must be computed at each time step. For more details we refer the reader to \cite{shen04,zheng08}.

The variational formulation of \eqref{eq4} reads: find $u^m = w^m + p_2^m \in P_N$ with $w^m\in V_N$ such that for every $v\in V_N^*$ it holds 
\[
(u^m,v) + \tau(u^m_{xxx},v) = (u^{m-1}-\tau g(x) u_x^{m-1}, v).
\]

In order to numerically evaluate the $L_2$ inner products $(\cdot,\cdot)$, we have to choose some interior collocation points. A typical choice are the Gauss--Lobatto points. These points are efficient for solving second order differential equations. However, due to the lack of symmetry of the considered third order problem, a better option is to choose as interior collocation points $\{x_k\}_{k=2}^{N-1}$ the roots of the Jacobi polynomial $P^{(2,1)}_{N-2}(x)$,  $x_1 = a$, $x_N=b$, see \cite{abramowitzstegun64,ma99}. 

The discrete inner product associated to the Gauss--Jacobi quadrature rule is given by
\begin{multline}
\label{eq21}
(u,v)_N = \sum_{k=2}^{N-1} \omega_k u(x_k)v(x_k) + \\ \omega_1 u(x_1)v(x_1)  + \omega_N u(x_N)v(x_N)  +\omega'_N \partial_x (uv)(x_N),   
\end{multline}
where the associated weights are 
\[
\begin{split}
& \omega_k = \frac{4}{N^2-1}  \bigg(\frac{2N+1}{N+2}\bigg)^2 \frac{1}{(1-x_k)\,[P^{(2,1)}_{N-1}(x_k)^2]},\quad k=2,\dots,N-1,\\
& \omega_1 = \frac{2}{N^2-1},\\
& \omega_N = \frac{4}{N^2} + \frac{8}{(N-1)N^2(N+1)} \sum_{k=1}^{N-1}\frac{1}{x_N-x_k},\\
& \omega'_N =  -\frac{8}{(N-1)N^2(N+1)}.
\end{split}
\]
The employed quadrature rule integrates exactly polynomials up to degree $2N-~2$. A detailed description of the generalized quadrature rule for third order problems used in this work can be found in \cite{huang92}.

We substitute $u^m = w^m + p_2^m$ and we use the fact that $w^m_{xxx} = u^m_{xxx}$, so we obtain 
\begin{equation}
\label{eq3}
(w^m,v)_N + \tau(w^m_{xxx},v)_N = (u^{m-1}-\tau g(x) u_x^{m-1}-p^m_2, v)_N.
\end{equation}
For any $0\leq k\leq N-3$, set
\[
\begin{split}
& \phi_k = L_k + \alpha_k L_{k+1} + \beta_k L_{k+2} + \gamma_k L_{k+3}, \\
& \psi_k = L_k - \alpha_k L_{k+1} + \beta_k L_{k+2} - \gamma_k L_{k+3}, 
\end{split}
\]
where $L_k$ is the $k$th Legendre polynomial and $\alpha_k, \beta_k, \gamma_k,$ are uniquely determined so that $\phi_k$ and $\psi_k$ belong to $V_N$ and $V_N^*$, respectively. The sequences $\{\phi_k\}_{k=0}^{N-3}$ and $\{\psi_k\}_{k=0}^{N-3}$ constitute a basis of $V_N$ and $V_N^*$, respectively. 
\begin{lemma}
The basis functions $\phi_k$ and $\psi_k$ satisfy
\[
\begin{split}
& (\partial_x^3 \phi_i, \psi_j) = -(\phi_i,\partial_x^3 \psi_j) = 2(2j+3)(2j+5)\gamma_j\delta_{ij} ,\\
& (\phi_i,\psi_j) = 0,\quad |i-j|>3,
\end{split}
\]
where $\delta_{ij}$ is the Kronecker delta and $(\cdot,\cdot)$ is the usual $L_2$ inner product. 
\end{lemma}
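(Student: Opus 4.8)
The plan is to treat the three assertions separately and to reduce each of them to the orthogonality of the Legendre polynomials on the reference interval, $(L_m,L_n)=\tfrac{2}{2n+1}\delta_{mn}$. The antisymmetry $(\partial_x^3\phi_i,\psi_j)=-(\phi_i,\partial_x^3\psi_j)$ is immediate: it is exactly the defining property of the dual space $V_N^*$, and it holds because $\phi_i\in V_N$ and $\psi_j\in V_N^*$, so that the boundary terms in the threefold integration by parts \eqref{eql1} all vanish. Hence only two scalar facts remain, the vanishing of the off-diagonal entries and the value on the diagonal.

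For the mass orthogonality $(\phi_i,\psi_j)=0$ when $|i-j|>3$, I would argue from the supports of the Legendre expansions. By construction $\phi_i$ is a combination of $L_i,L_{i+1},L_{i+2},L_{i+3}$ and $\psi_j$ of $L_j,L_{j+1},L_{j+2},L_{j+3}$; the index sets $\{i,\dots,i+3\}$ and $\{j,\dots,j+3\}$ are disjoint precisely when $|i-j|>3$, and then every pairing $(L_m,L_n)$ vanishes. For the third-derivative matrix I would first establish diagonality by a degree count, using the antisymmetry to handle the two off-diagonal cases symmetrically. If $i<j$, then $\partial_x^3\phi_i$ has degree at most $i<j$, so it is orthogonal to each of $L_j,\dots,L_{j+3}$ and therefore to $\psi_j$; if $i>j$, I would instead use $(\partial_x^3\phi_i,\psi_j)=-(\phi_i,\partial_x^3\psi_j)$ and note that $\partial_x^3\psi_j$ has degree at most $j<i$, hence is orthogonal to $\phi_i$. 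This establishes that the stiffness matrix is diagonal, i.e.~$(\partial_x^3\phi_i,\psi_j)=0$ for $i\neq j$; it remains to compute the diagonal entries.

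The main computation, and the only genuinely nontrivial step, is the diagonal entry $i=j$. Here I would exploit again that $\partial_x^3\phi_j$ has degree $j$: it is orthogonal to $L_{j+1},L_{j+2},L_{j+3}$, so only the $L_j$-component of $\psi_j$ survives and $(\partial_x^3\phi_j,\psi_j)=(\partial_x^3\phi_j,L_j)$. For the same reason the lower-order contributions $\partial_x^3 L_j,\partial_x^3 L_{j+1},\partial_x^3 L_{j+2}$, of degrees $j-3,j-2,j-1$, are orthogonal to $L_j$, so that $(\partial_x^3\phi_j,L_j)=\gamma_j\,(\partial_x^3 L_{j+3},L_j)$. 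Everything then reduces to the single scalar $(\partial_x^3 L_{j+3},L_j)$, which I would evaluate from the Legendre derivative expansion $L_m'=(2m-1)L_{m-1}+(2m-5)L_{m-3}+\cdots$: applying it three times, the coefficient of $L_j$ in $L_{j+3}'''$ is $(2j+5)(2j+3)(2j+1)$, whence $(\partial_x^3 L_{j+3},L_j)=(2j+5)(2j+3)(2j+1)\cdot\tfrac{2}{2j+1}=2(2j+3)(2j+5)$ and the claimed value follows.

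The hard part is thus localized entirely in this last identity: once the Legendre differentiation formula and its normalization are in hand, the remainder is bookkeeping with orthogonality and degree counts. The only point requiring care is the consistent use of the reference interval so that the constants $2(2j+3)(2j+5)$ come out exactly as stated; on a general interval $[a,b]$ an affine change of variables introduces only a scaling factor and does not affect the diagonal/sparsity structure.
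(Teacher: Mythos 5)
Your proposal is correct and follows essentially the same route as the paper: off-diagonal entries by degree counting combined with the duality relation $(\partial_x^3\phi_i,\psi_j)=-(\phi_i,\partial_x^3\psi_j)$, mass orthogonality from disjoint Legendre index sets, and the diagonal entry reduced to $\gamma_j(\partial_x^3 L_{j+3},L_j)$ and evaluated via the Legendre derivative expansion. Your use of the correct normalization $(L_n,L_n)=\tfrac{2}{2n+1}$ in fact recovers the factor $2$ in the stated value $2(2j+3)(2j+5)\gamma_j$, which the paper's own computation drops because it writes the orthogonality relation as $(L_i,L_j)=\tfrac{1}{2i+1}\delta_{ij}$.
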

\begin{proof}
By definition, we have 
\[
\partial_x^3\phi_i(x) = \partial_x^3 (L_i(x) + \alpha_i L_{i+1}(x) + \beta_i L_{i+2}(x) + \gamma_i L_{i+3}).
\]
The Legendre polynomials $L_i$ satisfy
\begin{equation}
\label{rel1}
\begin{split}
(L_i,L_j) &= \frac{1}{2i+1}\delta_{ij},\\
\partial_x L_i(x) &= \sum_{\substack{k=0\\k+i \text{ odd}}}^{i-1} (2k+1)L_k(x),\\
\partial_x^2 L_i(x) &= \sum_{\substack{k=0\\k+i \text{ even}}}^{i-2} \bigg(k+\frac{1}{2}\bigg)(i(i+1)-k(k+1))L_k(x),\\
\partial_x^3 L_i(x) & = \sum_{\substack{k=0\\k+i \text{ even}}}^{i-2} \bigg(k+\frac{1}{2}\bigg)(i(i+1)-k(k+1))\partial_x L_k(x)\\
& =\sum_{\substack{k=0\\k+i \text{ even}}}^{i-2} \bigg(k+\frac{1}{2}\bigg)(i(i+1)-k(k+1))\sum_{\substack{j=0\\j+k \text{ odd}}}^{k-1} (2j+1)L_j(x).
\end{split}
\end{equation}
For $j>i$, we have $(\partial_x^3 \phi_i,\psi_j)=0$ since $\partial_x^3 \phi_i$ is a linear combination of Legendre polynomials whose highest degree Legendre polynomial is $L_i(x)$. The conclusion follows by the orthogonality property of the Legendre polynomials. On the other hand,  
for $i<j$, we have 
\[
(\partial_x^3 \phi_i,\psi_j)=-(\phi_i,\partial_x^3\psi_j)=0.
\]

The only case left is $i=j$, where we have
\[
\begin{split}
(\partial_x^3 \phi_i,\psi_i) &= (\partial_x^3 (L_i + \alpha_i L_{i+1} + \beta_i L_{i+2} + \gamma_i L_{i+3}),L_i - \alpha_i L_{i+1} + \beta_i L_{i+2} - \gamma_i L_{i+3})\\
& = \gamma_i(\partial_x^3 L_{i+3},L_i).
\end{split}
\] 
By using the relations \eqref{rel1}, we obtain
\[
\gamma_i(\partial_x^3 L_{i+3},L_i) = \bigg(\sum_{\substack{k=0\\k+i \text{ even}}}^{i+1} \bigg(k+\frac{1}{2}\bigg)((i+3)(i+4)-k(k+1))\sum_{\substack{j=0\\j+k \text{ odd}}}^{k-1} (2j+1)L_j,L_i\bigg).
\]
The only non-zero term is given by $k=i+1,\, j = k-1 = i$, so we have
\[
\gamma_i(\partial_x^3 L_{i+3},L_i) = \gamma_i(2i+3)(2i+5).
\]
By using the orthogonality property of the Legendre polynomials, one can easily prove 
\[
(\phi_i,\psi_j)=0 \text{ for } |i-j|>3,
\]
which is the desired result.
\end{proof}
We remark the fact that the $L_2$ inner product $(p,q)$ coincides with the discrete inner product $(p,q)_N$ for all polynomials $p$, $q$ such that $\deg p+\deg q\leq2N-2$. This is a property of the quadrature rule.

\noindent Since $w^m\in V_N$ we can express it in terms of $V_N$ basis functions 
\[
w^m(x) = \sum_{j=0}^{N-3} \hat{w}^m_j \phi_j(x).
\]
Therefore, the variational formulation \eqref{eq3} with $v=\psi_k$ becomes
\[
\sum_{j=0}^{N-3}\hat{w}^m_j\big[(\phi_j, \psi_k)_N + (\partial_x^3 \phi_j,\psi_k)_N\big] = (u^{m-1}-\tau g(x) u_x^{m-1}-p_2^m,\psi_k)_N 
\]
for $j,k=0,\dots,N-3$.

\noindent Let us define the mass matrix 
\[
M=(m_{j,k}) = (\phi_j,\psi_k)_N
\] and the stiffness matrix 
\[
S=(s_{j,k})=(\partial_x^3 \phi_j,\psi_k)_N
\]
for $j,k=0,\dots,N-3$. 

The discrete inner product  $(\phi_j,\psi_k)_N$ coincides with  the $L_2$ inner product $(\phi_j,\psi_k)$ for $j+k+6\leq 2N-2$ since $\phi_k$ and $\psi_k$ are polynomials of degree $k+3$. We can compute the entries $m_{j,k}$ using the $L_2$ inner product except for 
\[m_{N-4,N-3},\,\,m_{N-3,N-4},\,\,m_{N-3,N-3},
\]
where the discrete inner product must be used. The entries $s_{j,k}$ can be computed using the $L_2$ scalar product only, since $\partial_x^3 \phi_j$ is a polynomial of degree at most $N-3$ and $(\partial_x^3 \phi_j)\psi_k$ does not exceed the degree $2N-2$. 
The matrices $M$ and $S$ are seven-diagonal and diagonal, respectively. Therefore, the linear system 
\begin{equation}
\label{eq18}
(M+\tau S)\hat{u}^m = (\Psi^T,u^{m-1}-\tau g(x) u_x^{m-1}-p_2^m)_N,
\end{equation}
where 
\[
\Psi = 
\begin{pmatrix}
\psi_0(x_2) & \dots &\psi_{N-3}(x_2)\\
\vdots & & \\
\psi_0(x_{N-1}) &\dots &\psi_{N-3}(x_{N-1})\\
\end{pmatrix},\quad (\Psi^T,v)_N = \begin{pmatrix}
(\psi_0,v)_N\\
\vdots \\
(\psi_{N-3},v)_N
\end{pmatrix}
\]
has a small bandwidth and is solvable in $O(N)$ operations. 

\begin{remark}
For a constant $g$ the variational formulation \eqref{eq3} becomes
\begin{equation}
\label{eq12}
\begin{split}
 (u^{m},\psi_k)_N + \tau(u^{m}_{xxx},\psi_k)_N &= (u^{m-1},\psi_k)_N - \tau g(u^{m-1}_x,\psi_k)_N\\
 (M+\tau S) \hat{u}^{m} &= (M-\tau D)\hat{u}^{m-1}.
\end{split}
\end{equation}
The matrix $D=(d_{j,k})$ given by $(\partial_x \phi_j,\psi_k)_N$ is not a banded matrix, but a lower triangular matrix as a straightforward computation shows. The linear system \eqref{eq12} is, therefore, expensive to solve.
\end{remark} 
\begin{remark}
For arbitrary $g$ the linear system \eqref{eq18} can be rewritten as 
\[
\begin{split}
(M+\tau S)\hat{u}^{m} &= (\Psi^T,u^{m-1})_N -\tau(\Psi^T,g(x) u_x^{m-1})_N -(\Psi^T,p_2^m)_N\\
&= M\hat{u}^{m-1} -\tau(\Psi^T,g(x) u_x^{m-1})_N -T\hat{p}_2^m,
\end{split}
\]
where $T$ is $m$ independent. The second term on the right-hand side is expensive to compute, since it involves $O(N^2)$ operations. This number can be reduced to $O(N\log N)$ if the Chebyshev--Legendre dual-Petrov--Galerkin method is implemented, see \cite{don94}. Namely, we introduce $I^C_N$, the interpolation operator based on the Chebyshev--Gauss--Lobatto points, and we replace $g(x) u_x^{m-1}$ with $I^C_N\big(g(x) u_x^{m-1}\big)$. Now, we can use the fast Chebyshev--Legendre transform to compute the coefficients of the second term in $O(N\log N)$ operations. For more details we refer the reader to \cite{ma99}.
\end{remark}

\section{Implementation and numerical results}
\label{sec4}
In this section, we discuss some implementation aspects. We also present  numerical results that show the properties of the numerical scheme derived in the previous sections. In particular, we illustrate the super-polynomial spatial convergence of the employed dual-Petrov--Galerkin method. We show first-order convergence in time and the effect of the transparent boundary conditions on the numerical scheme. We also perform numerical simulations for different choices of the function $g$ and show that the transparent boundary conditions do not cause any reflections at the boundaries.

\subsection{Stability}
The numerical scheme \eqref{eq4} is derived by discretizing in time the dispersive part using the implicit Euler scheme and the advection part by the explicit Euler scheme. In the simpler case where we assume periodic boundary conditions and $g(x)=g$ constant we can easily show stability via Fourier analysis. 
In particular, we write $u^m$ as its Fourier series and we indicate the coefficients of the series with $(\hat{u}^m_k)_{k\in\Z}$. We obtain
\[
(1-\tau ik^3)\hat{u}^{m+1}_k = (1-\tau igk)\hat{u}^m_k.
\]
The squared amplification factor given by
\[
\bigg|\frac{\hat{u}^{m+1}_k}{\hat{u}^m_k}\bigg|^2 = \frac{1+\tau^2g^2k^2}{1+\tau^2 k^6},\quad k=0,1,2,\dots
\]
is always less or equal than $1$ for $|g|\leq1$, and can be bounded by $1+c\tau$ for $|g|>1$ with a constant $c$ depending only on $g$. We thus obtain
\[
|\hat{u}^{m}_k| \leq (1+c\tau)^{m/2} |\hat{u}^0_k|, 
\]
which shows that the scheme is stable. We remark that using explicit Euler for the semi-discrete advection problem would be unstable, but the overall scheme is stable due to the use of the implicit Euler method for the dispersive part. Further, we remark that for $|g|\leq 1$ the problem is unconditionally stable. This simple analysis gives an indication that also for the problem considered in this work, where  $g(x)$  is no longer constant and transparent boundary conditions are employed, the same might hold. The numerical experiments support the above observations.

\subsection{Inverse $\mathcal{Z}$-transform}

A numerical procedure to compute the inverse $\mathcal{Z}$-transform is given in \cite{besse16}. Here we recall the main aspects.
Given a sequence $(u^l)_{l\geq0}$ its $\mathcal{Z}$-transform reads
\begin{equation}
\label{eq14}
\hat{u}(z) = \mathcal{Z}\{(u^l)_{l\geq0}\}(z) = \sum_{l=0}^{\infty} u^l z^{-l},\quad |z|>\rho\geq 1.
\end{equation}
To recover the $l$th element of the sequence starting from $\hat{u}$ one can use Cauchy's integral formula
\begin{equation}
\label{eq17}
u^l = \frac{1}{2i\pi}\oint_{S_r} \hat{u}(z)z^{l-1}\,\text{d}z,\quad r>\rho,
\end{equation}
where $S_r$ is the circle with center $0$ and radius $r$. Approximating the integral by trapezoidal rule with $N$ points, we get
\begin{equation}
\label{eq13}
u^l \approx r^l\mathcal{F}^{-1}\{\mathbf{U}\}(l),\quad 0\leq l<N,
\end{equation}
where $\mathcal{F}^{-1}$ denotes the inverse discrete Fourier transform and $\mathbf{U}=\{U_k\}_{0\leq k\leq N}$ is the $N$-periodic sequence with $U_k = \hat{u}(r e^{2\pi i k/N})$. 

The radius $r$ and the number of points $N$ should be chosen in order to guarantee good approximations of the coefficients $u^l$. 
The choice of the radius $r$ has been studied in \cite{lubich88, zisowsky03}. Notice that $r>1$ must be sufficiently close to $1$ so that the inverse $\mathcal{Z}$-transform does not become numerically unstable when $l$ grows. A possibility to suppress this instability problem is to compute the inverse $\mathcal{Z}$-transform with a quadruple precision algorithm as proposed in \cite{besse16}. A more effective remedy to overcome the instability has been proposed in \citep{besse16a}. There the authors exploit the relation between the coefficients of the polynomial \eqref{cpol} and its roots. For simplicity, we only consider the former approach in the present work. We set $r=\mathrm{e}^{C\tau}$ with appropriately chosen $C>0$ and $N= m\lceil |\log(10^{-7})|\rceil$, where $m$ is the number of time steps. A complete analysis of the optimal choice of the radius is not the goal of this work. For a thorough discussion we refer the reader to \cite{lubich88,zisowsky03}. 

The number of points $N$ discretizing \eqref{eq17} should be large enough in order to achieve good approximations of the coefficients $u^l$ in \eqref{eq13}. It is known that the discrete Fourier transform using $N$ points is a good approximation of the Fourier transform for all the $u^l$ such that $|l|\leq N/2$. Therefore, $N\geq 2l$ is a good choice. 

In formulae \eqref{eq15} convolution products appear for computing the transparent boundary conditions. The computation of such quantities becomes expensive as $l$ grows. Therefore, for sufficiently long time simulations, the computational cost will be dominated by the number of time steps. In order to keep the computational cost of the convolution sums to a minimum, one can approximate them, for example with a sum of exponentials approach. For a thorough discussion about the discrete transparent boundary conditions and the sum of exponential approach we refer the reader to \cite{arnold03,besse16}.  

\subsection{Numerical results}
For the numerical tests we consider the problem
\begin{equation}
\label{eq16}
\begin{cases}
u_t + g(x) u_x + u_{xxx} = 0,\quad (t,x)\in[0,T]\times[-6,6],\\
u(0,x) = \mathrm{e}^{- x^2},\\
\end{cases}
\end{equation}
subject to transparent boundary conditions. The spatial domain is chosen sufficiently large so that the initial data $u(0,x)$ is close to $0$ for $x=\pm6$, in particular $|u(0,\pm6)|<10^{-15}$. Therefore, even if the initial data has not compact support, the effects produced by $u(0,x)$ at the boundaries are negligible. 

As first example we consider problem \eqref{eq16} with 
\[
g(x) = 0,\quad T=2.
\]
The same problem is considered in \cite{besse16, zheng08}. In this case the exact solution is available and given by 
\[
u_{\text{exact}}(t,x) = E(t,x)*u(0,x),\quad E(t,x) = \frac{1}{\sqrt[3]{3t}}\mathrm{Ai}\Big(\frac{x}{\sqrt[3]{3t}}\Big),
\]
where $*$ is the convolution on the entire real line and $\mathrm{Ai}$ denotes the Airy function. Notice that the spatial domain is $[-6,6]$ whereas the pseudo-spectral approach derived in Section \ref{secPseudo} works only for $x\in[-1,1]$. Therefore, we first scale problem \eqref{eq16} to $[-1,1]$. 
Then, we compute the numerical solution to the scaled problem and finally we scale back to the original domain. In Fig.~\ref{fig1} we show the result of the numerical simulation.   No reflections can be seen at the boundaries. 
\begin{figure}
\centering
\includegraphics[scale=.39]{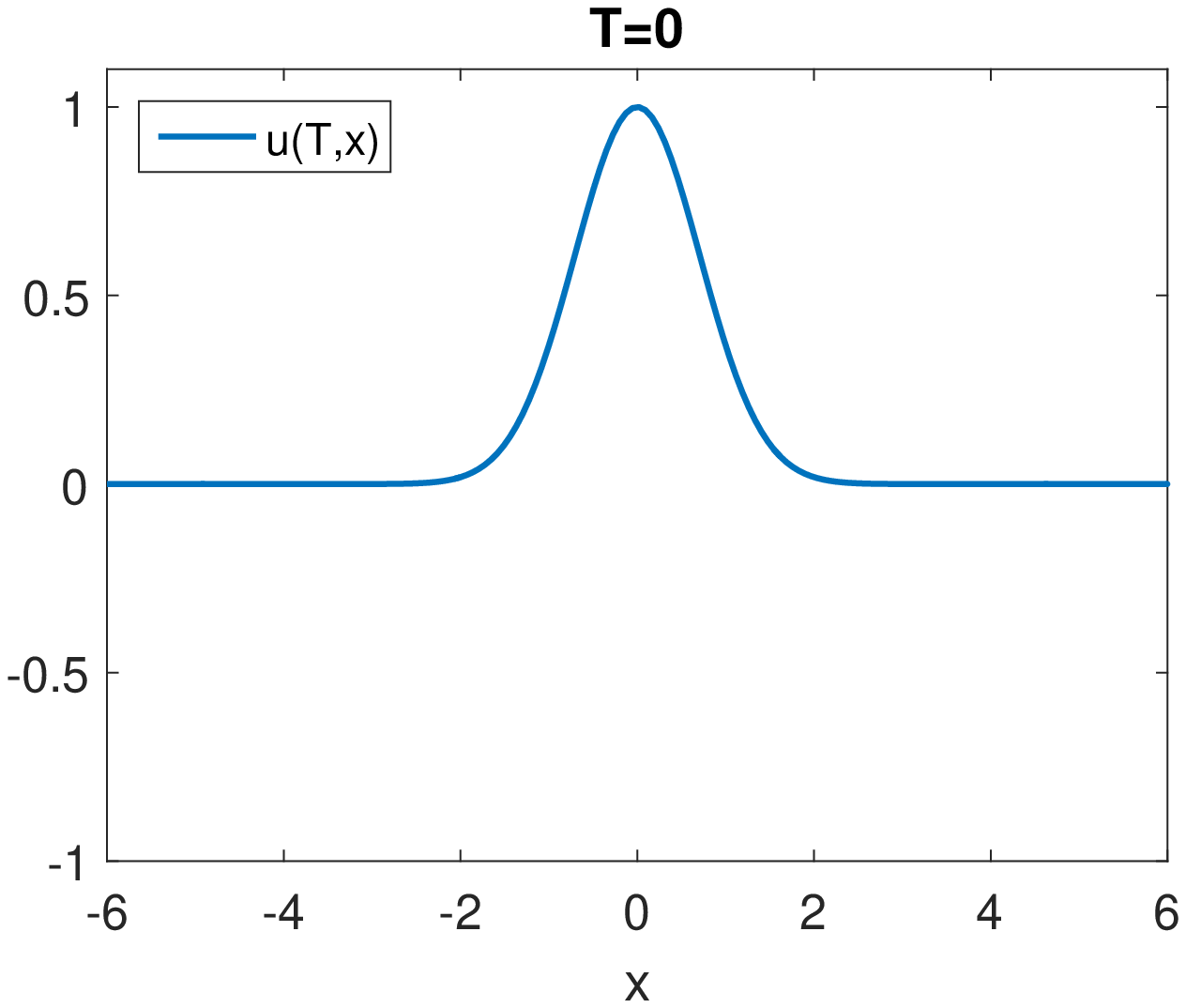}
\includegraphics[scale=.39]{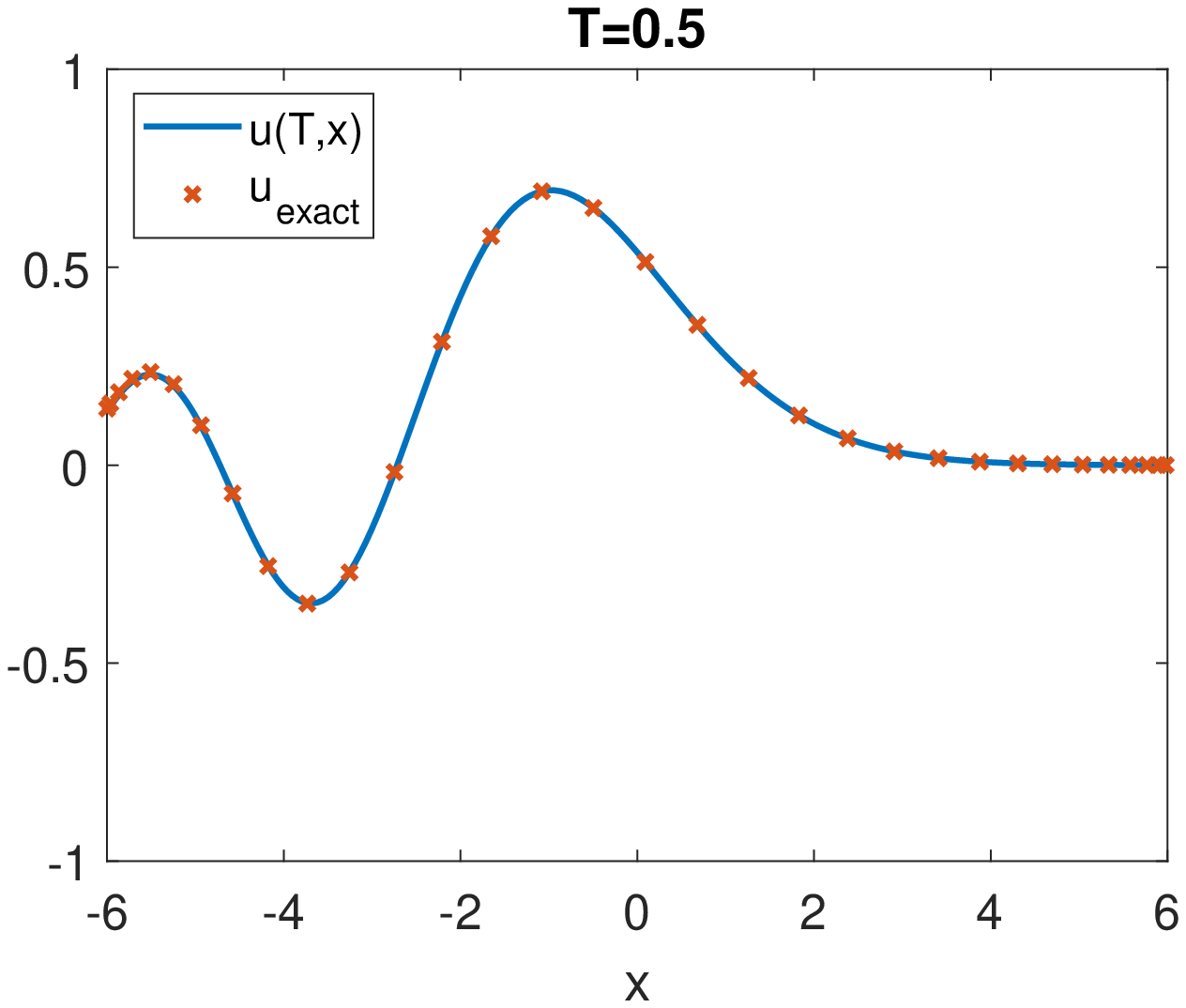}
\includegraphics[scale=.39]{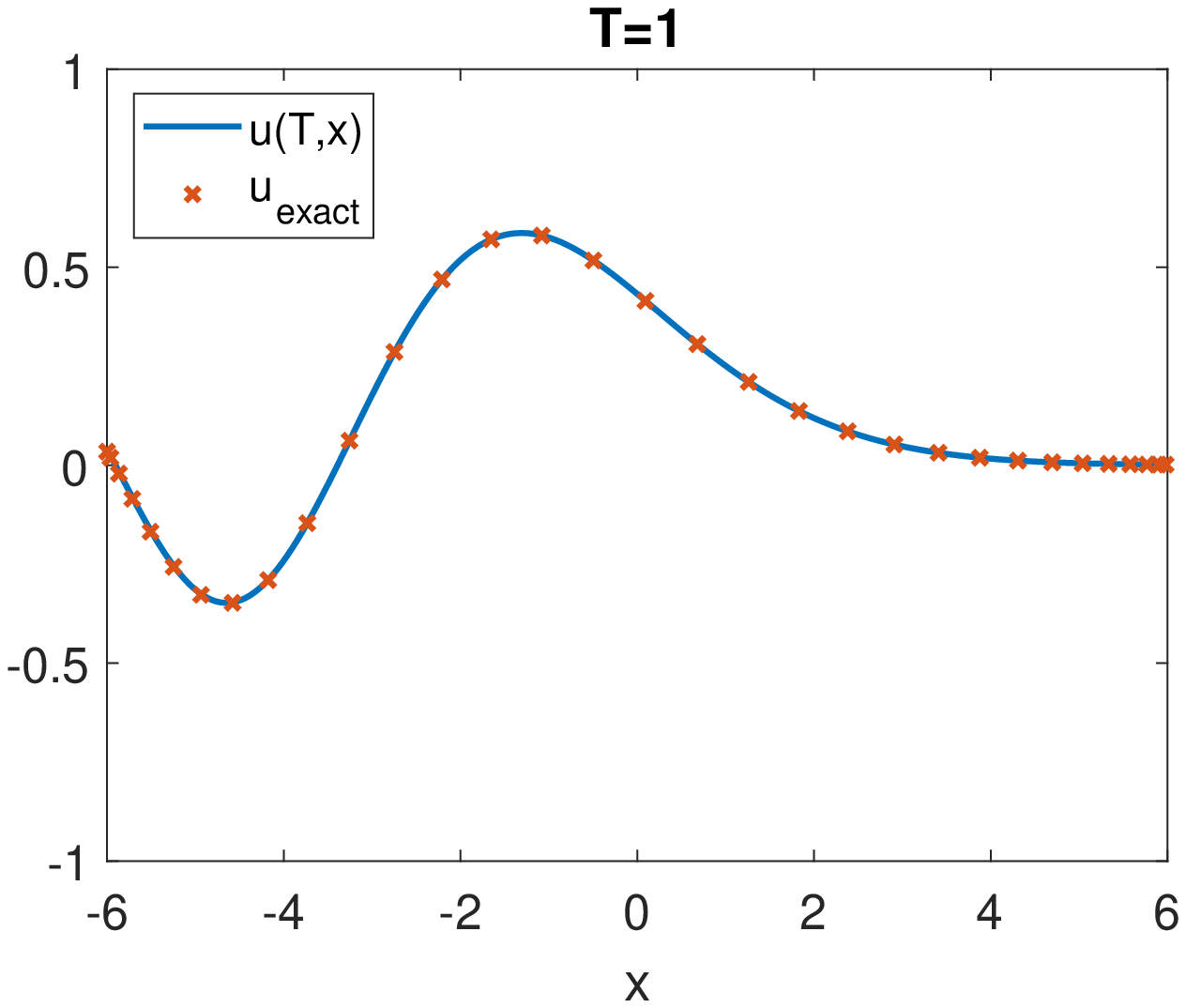}
\includegraphics[scale=.39]{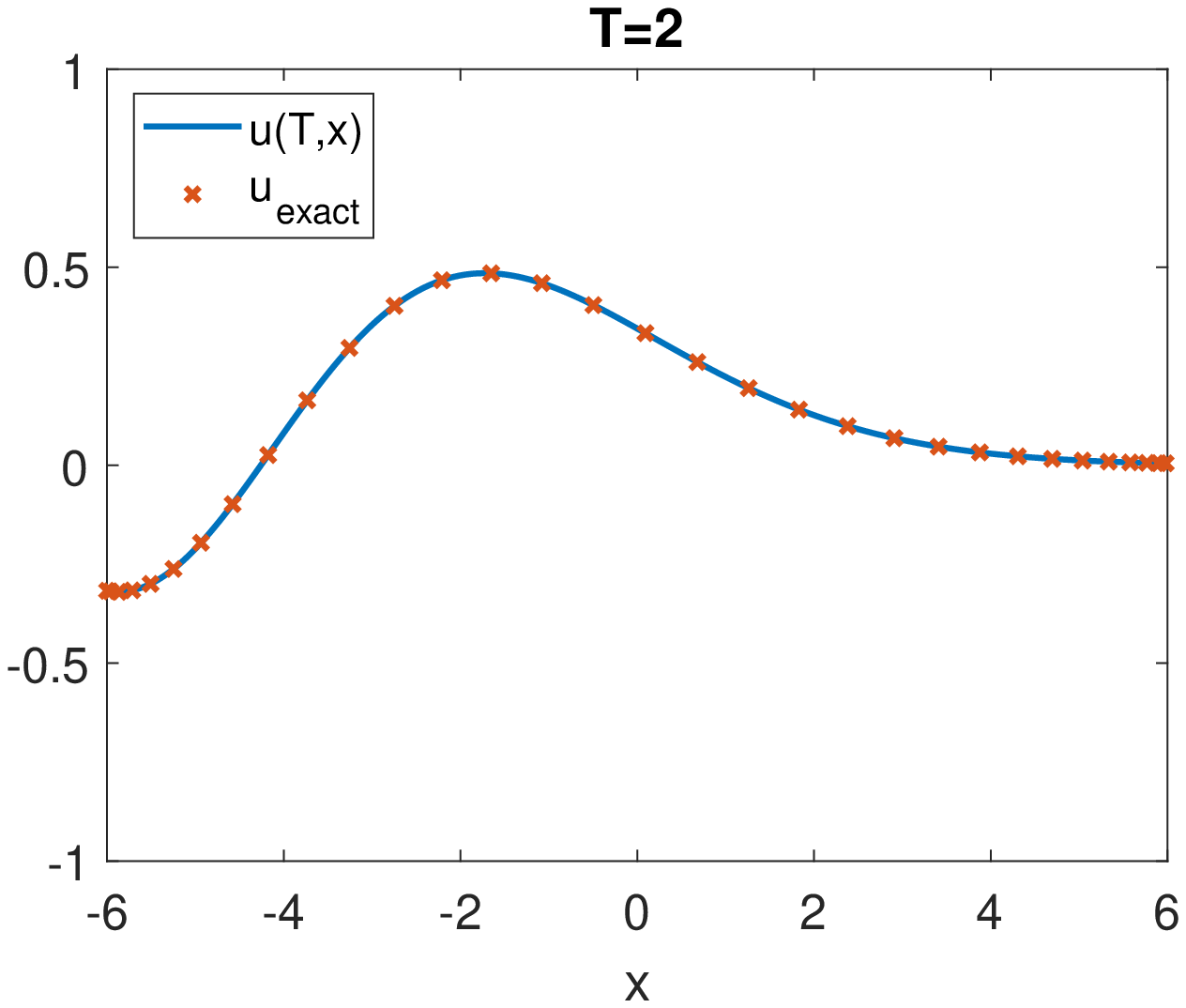}
\caption{Exact and numerical solution of problem \eqref{eq16} for $g(x)=0$ and final time $T=2$. We use $m=2^{11}$ time steps and $N=2^8$ points for the space discretization.}
\label{fig1}
\end{figure}

In Fig.~\ref{fig2} we present the numerical solution for 
\[
g(x) = 6,\quad T=2.
\]
This example is also considered in \cite{besse16}, where a numerical solution was provided by employing a finite difference spatial discretization. Our approach achieves very accurate spatial result using a modest number of grid points. Therefore, it gives an improvement compared  to the finite difference approach, where a fairly large amount of spatial gird points are used. We notice the effect of the advection term that shifts the solution to the right.
The exact solution can be obtained via Fourier transform, see \cite{besse16}. Again no reflections can be observed at the boundaries. 
\begin{figure}
\centering
\includegraphics[scale=.39]{u0}
\includegraphics[scale=.39]{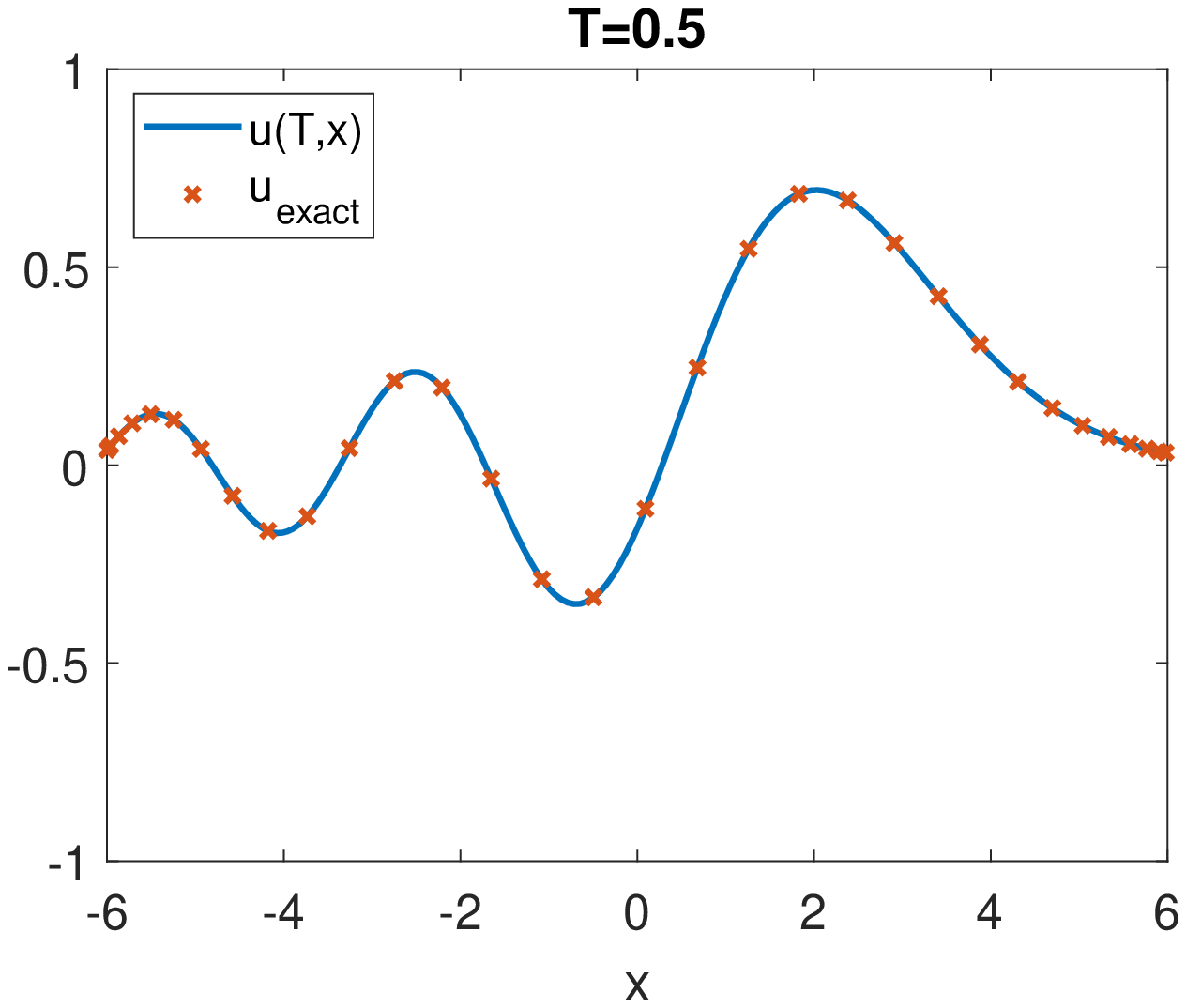}
\includegraphics[scale=.39]{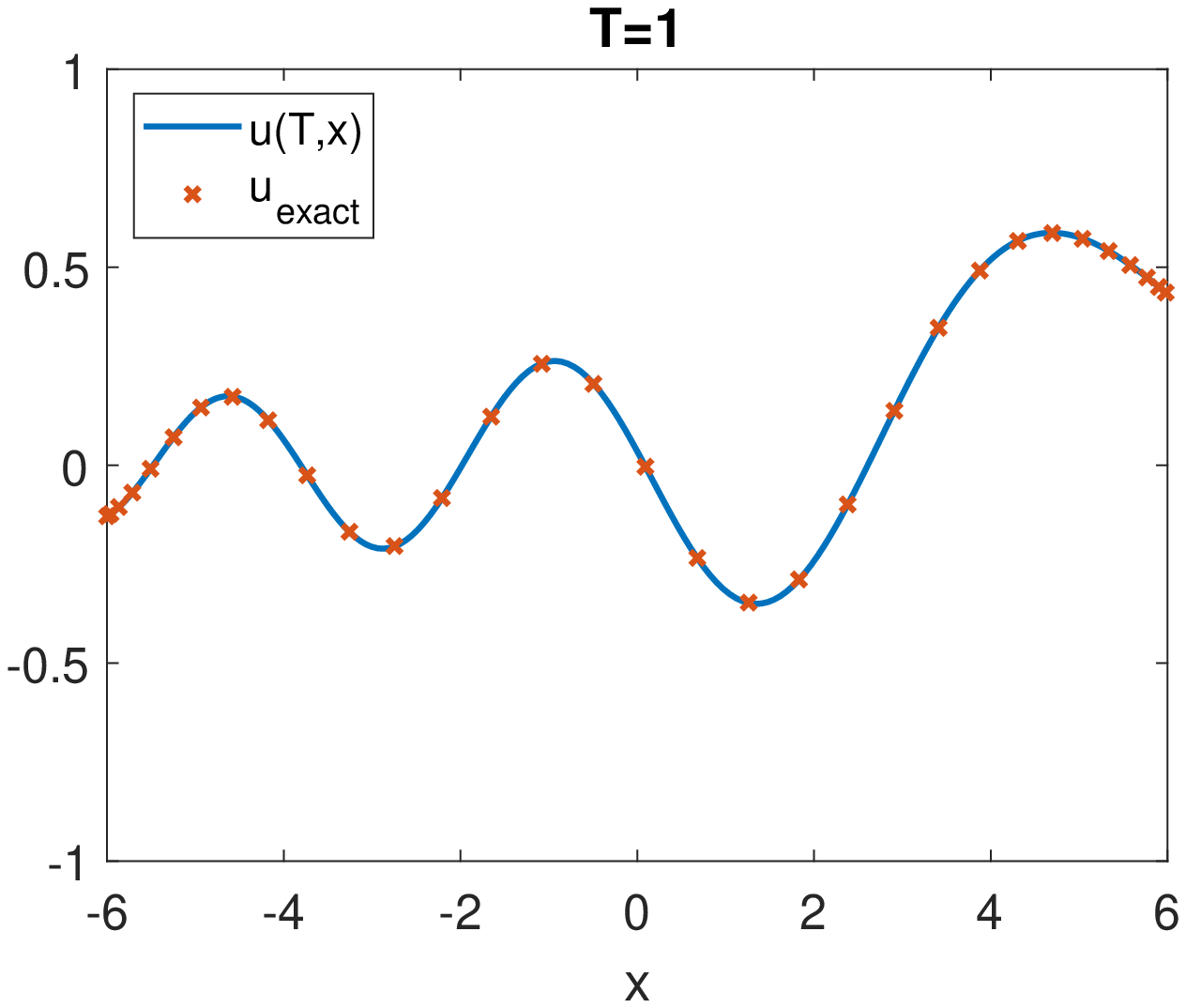}
\includegraphics[scale=.39]{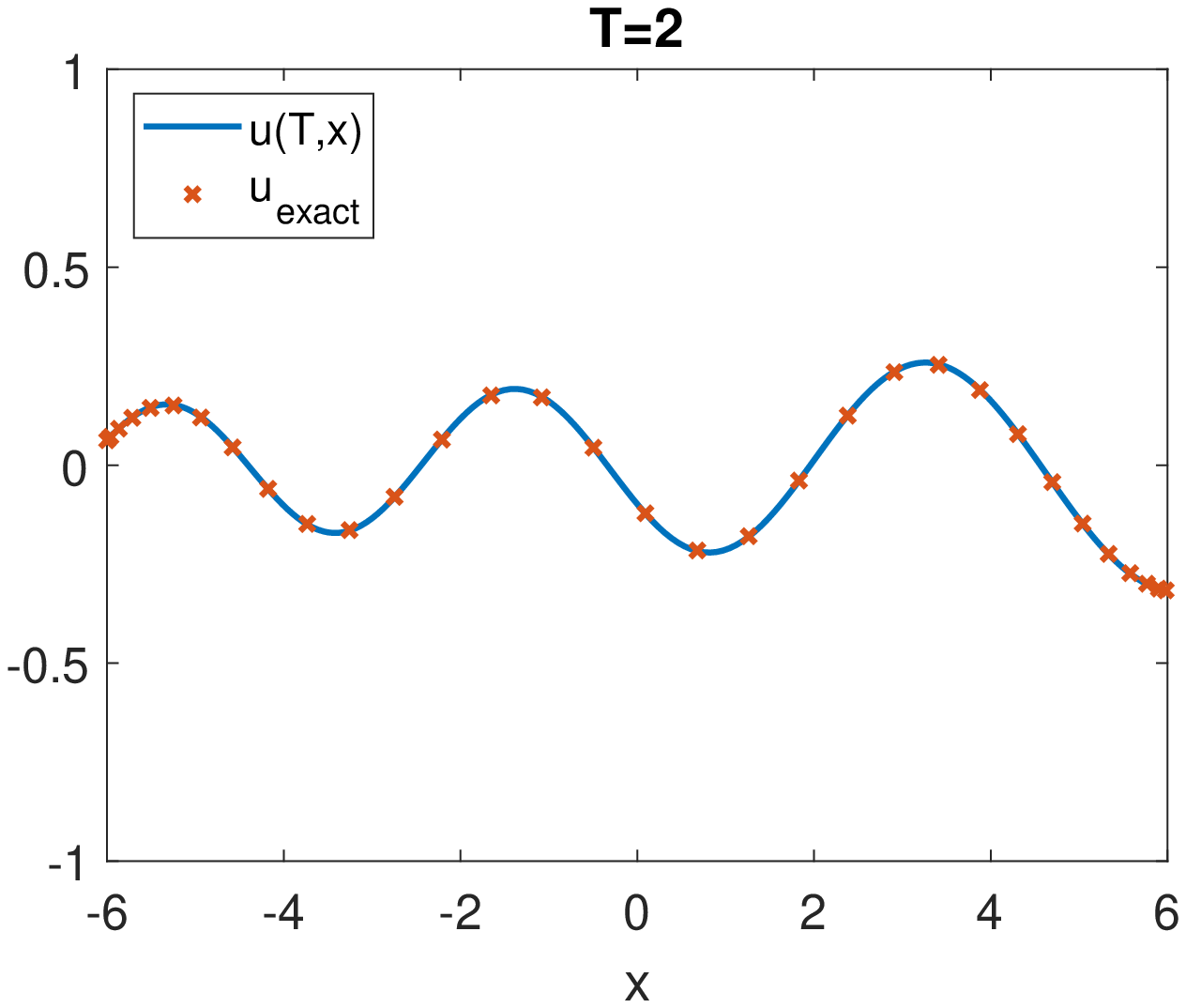}
\caption{Exact and numerical solution of problem \eqref{eq16} for $g(x)=6$ and $T=2$. We use $m=2^{14}$ time steps and $N=2^8$ points for the space discretization.}
\label{fig2}
\end{figure}

In Fig.~\ref{fig5} we present the numerical solution for 
\[
g(x) = -6,\quad T=1.
\]
This example has a negative velocity $g$ and the exact solution can be obtained via Fourier transform. We notice that the advection term shifts the solution to the left, so no dynamic is happening at the right boundary. The solution leaves the domain through the left boundary without reflections.
\begin{figure}
\centering
\includegraphics[scale=.39]{u0}
\includegraphics[scale=.39]{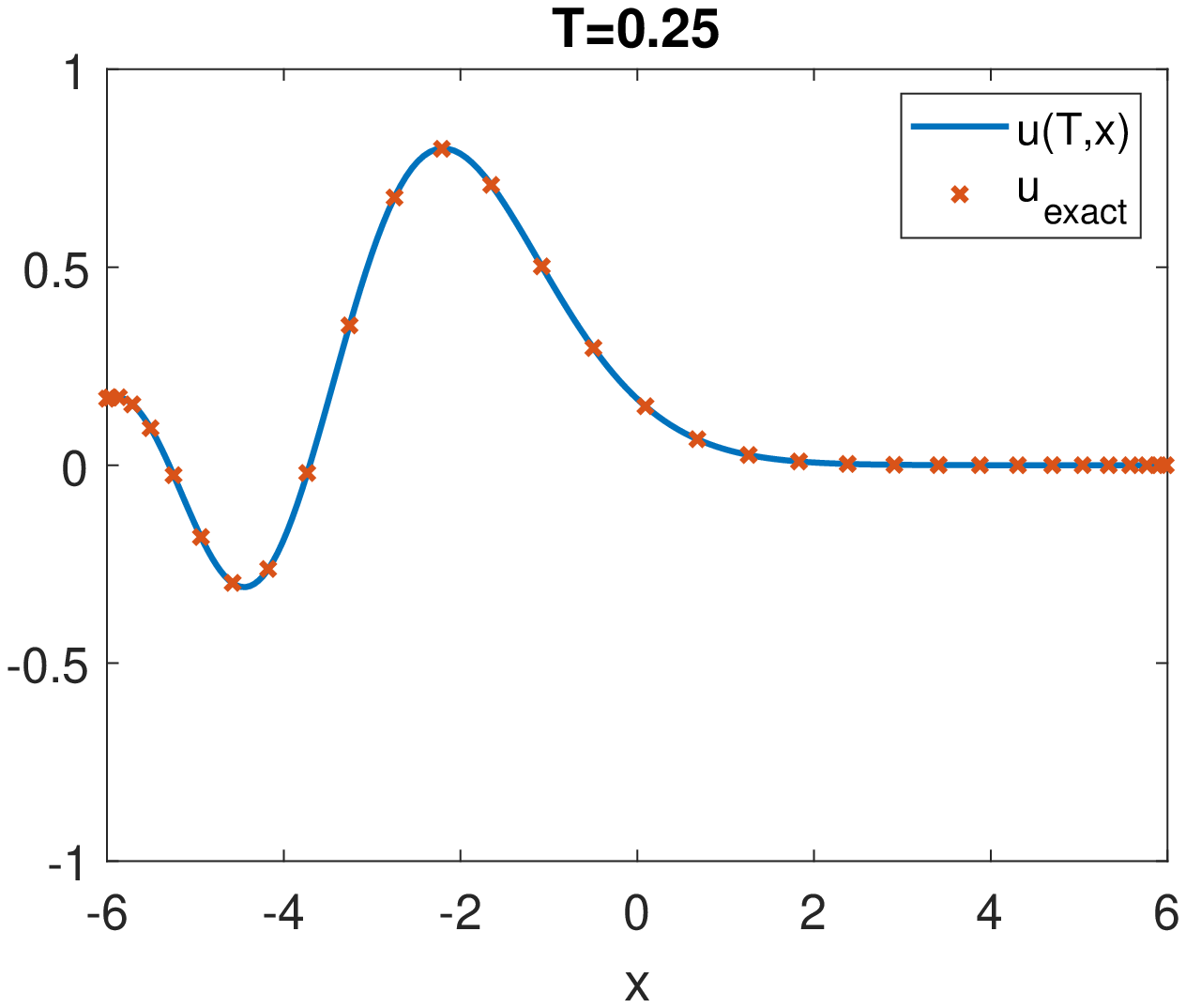}
\includegraphics[scale=.39]{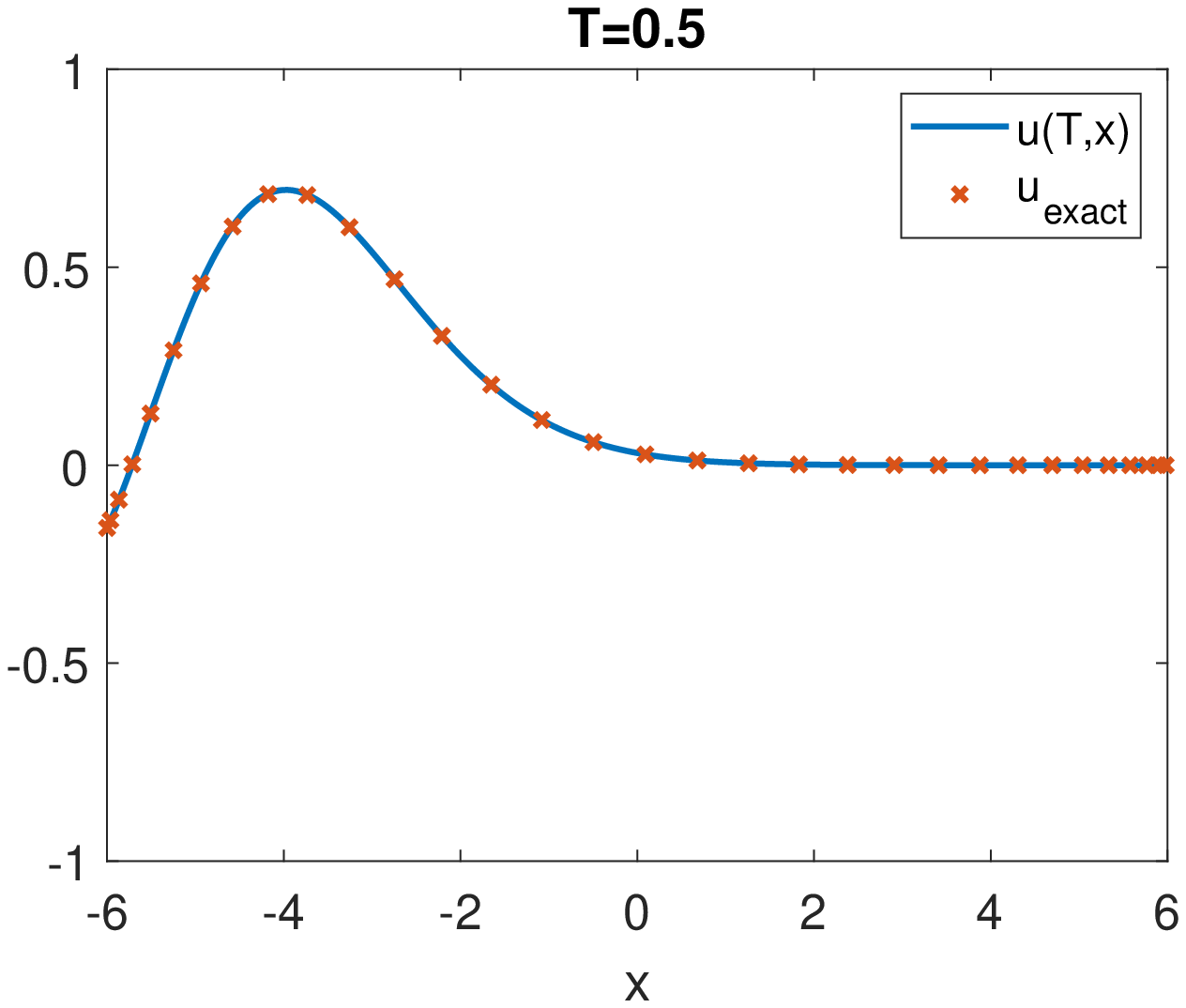}
\includegraphics[scale=.39]{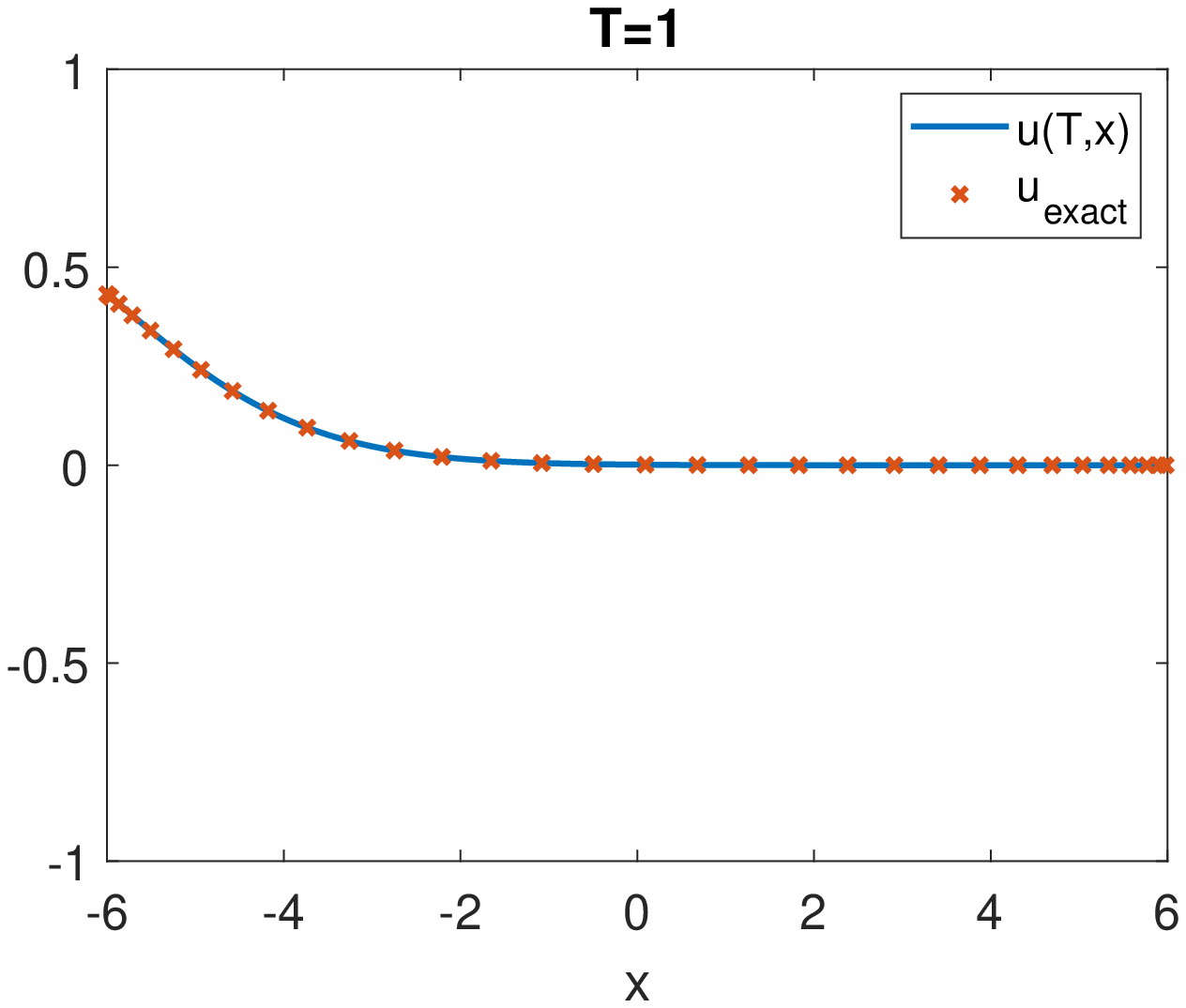}
\caption{Exact and numerical solution of problem \eqref{eq16} for $g(x)=-6$ and $T=~1$. We use $m=2^{12}$ time steps and $N=2^8$ points for the space discretization.}
\label{fig5}
\end{figure}

Finally, we consider a third example where $g$ is no longer constant and given by
\[
g(x)=\pi\bigg(1+\cos\bigg(\frac{\pi(x+6)}{12}\bigg)\bigg),\quad T=1.
\]
The function $g$ is chosen in such a way that its extension to the whole domain $\mathbb{R}$ is smooth. Therefore, since $g$ is assumed to be constant outside the finite computational domain $[-6,6]$, we ask for $g_x(\pm 6) = 0$. In Fig.~\ref{fig3} we plot the function $g$ as well as the numerical solution $u(T,x)$. We notice an advection for $x\in[-6,0]$ and a decay of $g$ for $x\in[0,6]$. One expects that the solution will be shifted to the right due to the advection term for $x\in[-6,0]$ and that there will be almost no effects in the second part of the domain, i.e., for $x$ close to $6$. This behaviour can be clearly seen in the plot, where, at the right boundary, the solution stays close to $0$.     
\begin{figure}
\centering
\includegraphics[scale=.39]{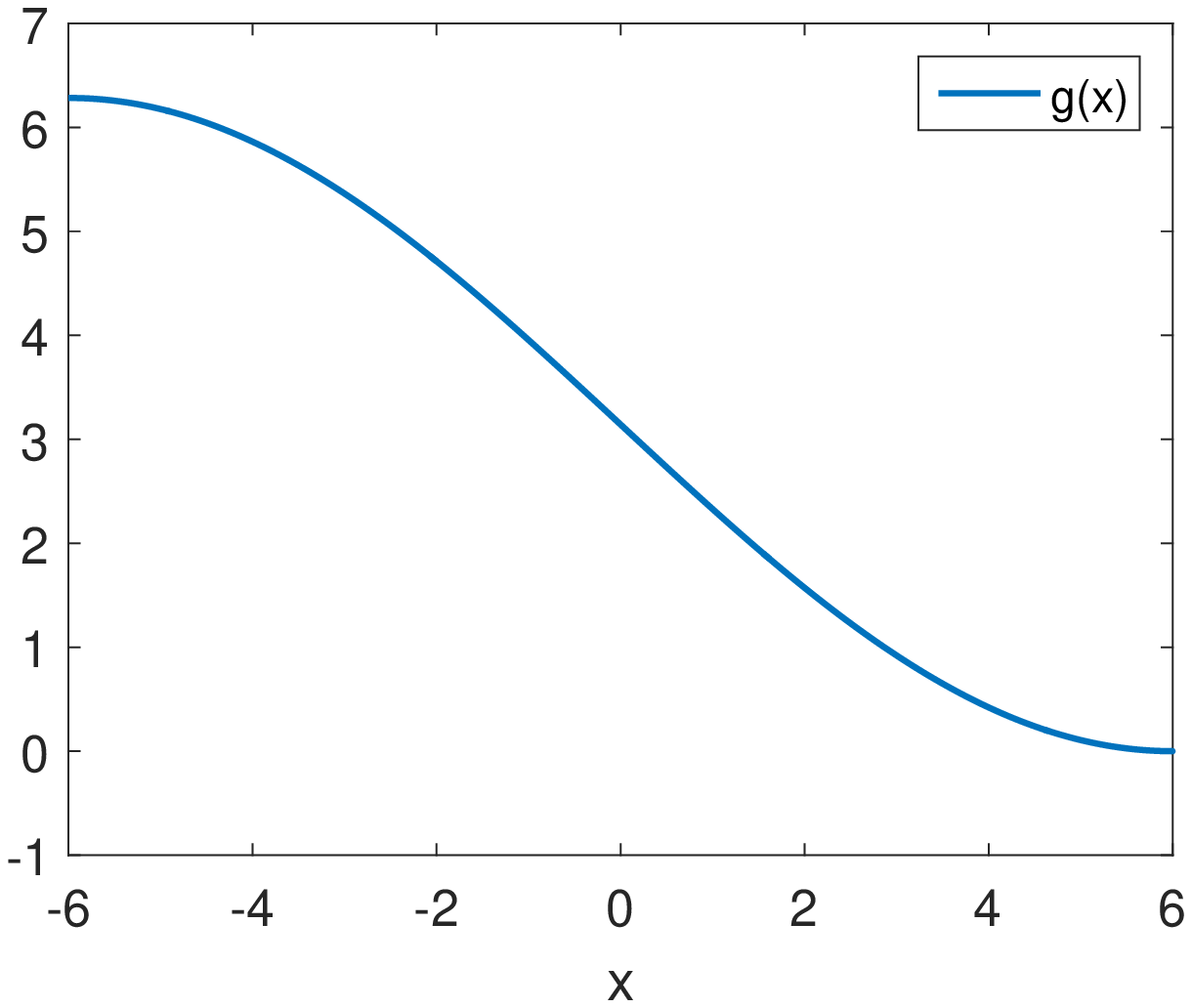}
\includegraphics[scale=.39]{u0}
\includegraphics[scale=.39]{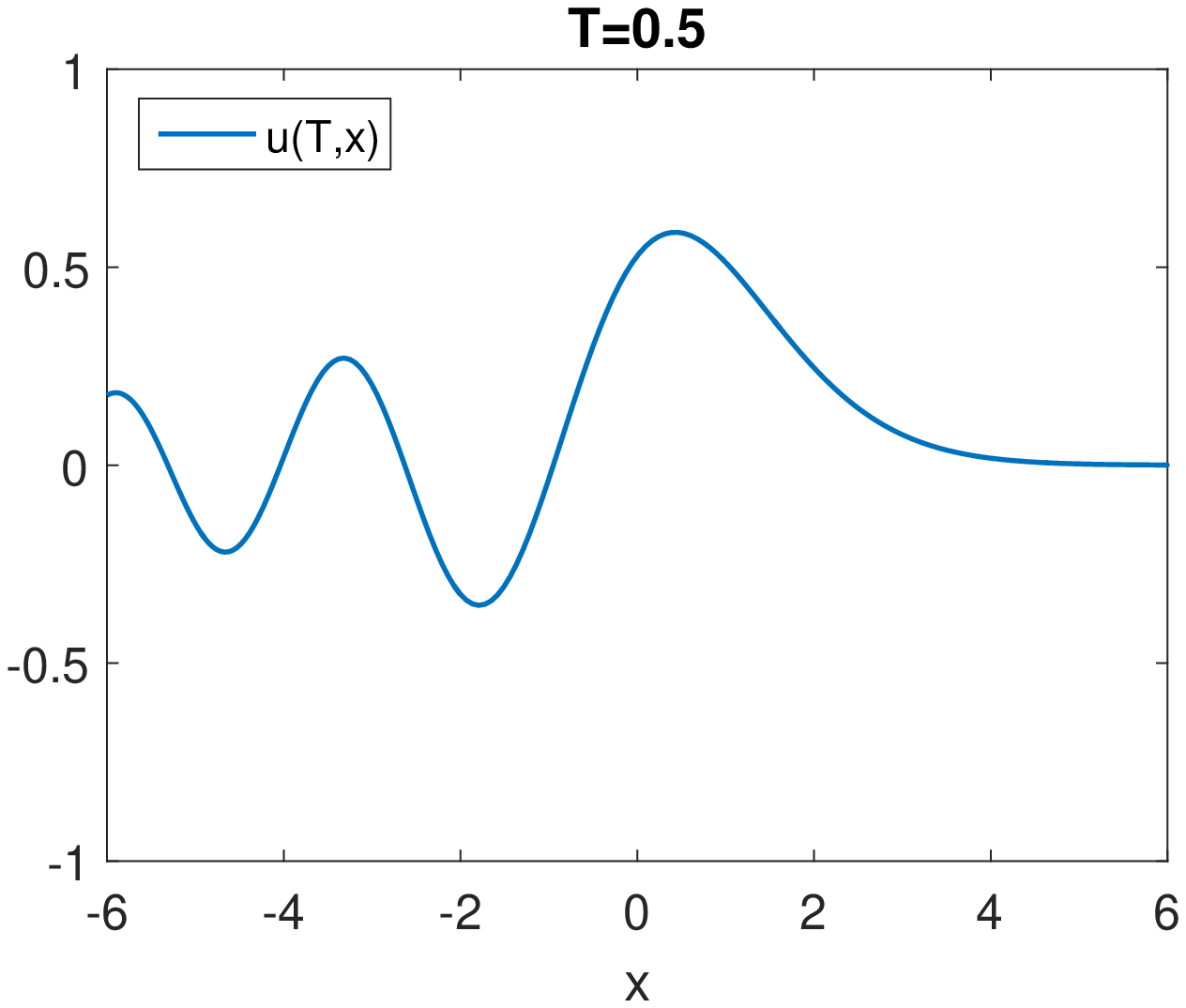}
\includegraphics[scale=.39]{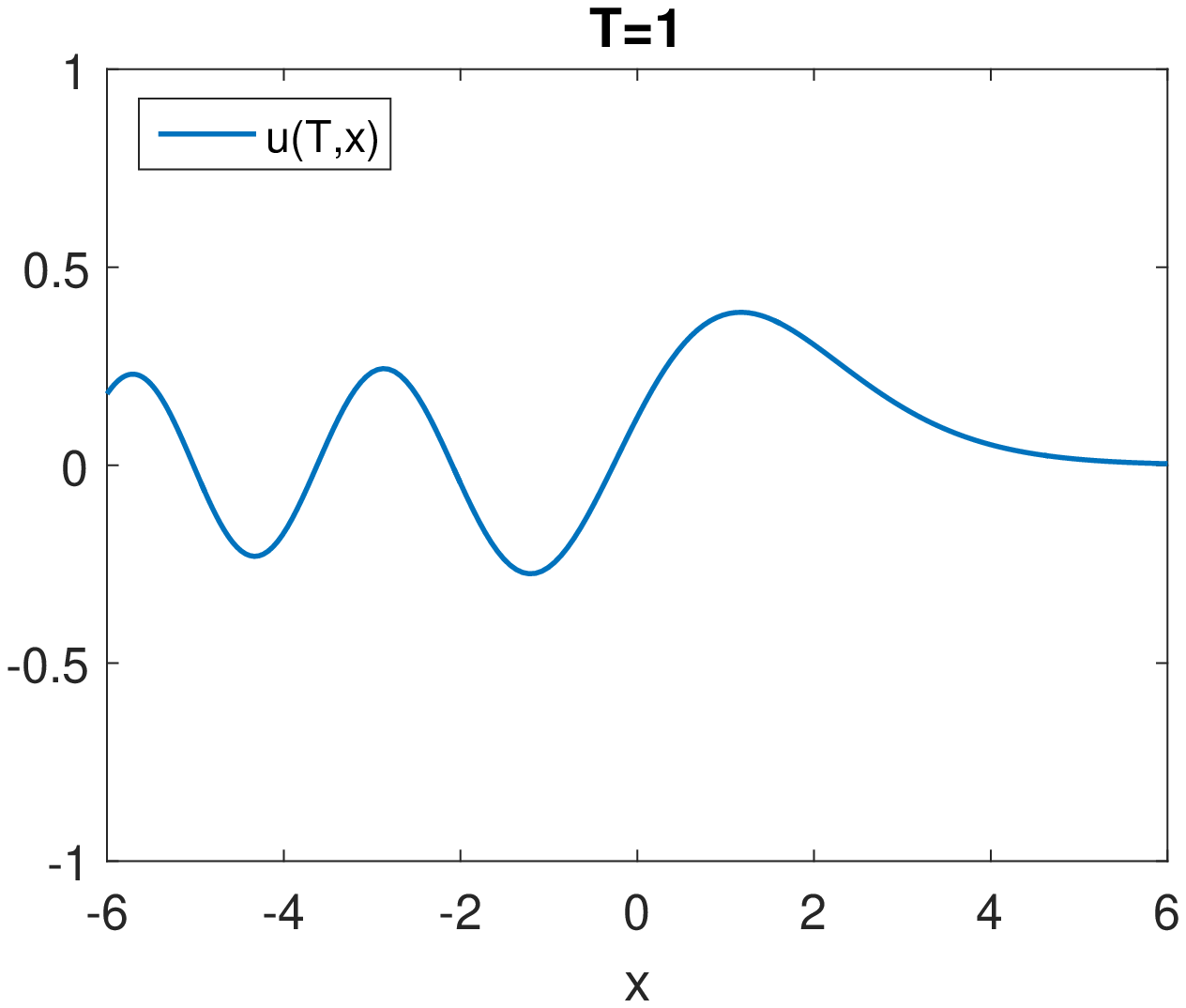}
\caption{In the top-left picture the function $g(x)=\pi\big(1+\cos(\frac{\pi(x+6)}{12})\big)$ is plotted. In the top-right picture we plot the initial data $u_0(x)$. In the second row the numerical solutions for times $T=0.5$ and $T=1$ are plotted. We use $m=2^{13}$ points for the time discretization and $N=2^8$ for the space discretization.}
\label{fig3}
\end{figure}

In Fig.~\ref{fig4} (left picture) we show the behaviour of the spatial error of the numerical solution compared to a reference solution for a fixed time $T=0.5$ and time step $\tau = 2^{-14}$ for the three different choices of $g(x)$. As a reference solution we consider the numerical approximation employing $N=64$ grid points. In this setting the spatial error dominates the time error. Let    
\[
\text{err}^m = \sqrt{\sum_{i=1}^N \bigg(\frac{u^m_{\text{ref}}(x_i)-u^m_{N}(x_i)}{u^m_{\text{ref}}(x_i)}\bigg)^2}
\]
be the relative $\ell^2$ spatial error computed at time $t_m = \tau m$ for $N\in[16,48]$. The grid points $x_i$ are the collocation points as in \eqref{eq21}. From $\text{err}^m$ we compute the $\ell^2$ error in time as 
\[
\Vert\text{err}\Vert_{\ell^2} = \sqrt{\tau \sum_{m=1}^M (\text{err}^m)^2}.
\]
We observe in a semilogy plot the typical supergeometric convergence $\exp(-cN^2)$ for analytic functions by spectral methods, see \cite{boyd13}. The error plot confirms the theoretical derivations of the previous sections. In particular, with less then $50$ spatial points we can achieve numerical solutions with an error less than $10^{-7}$.
\begin{figure}
\centering
\includegraphics[scale=.39]{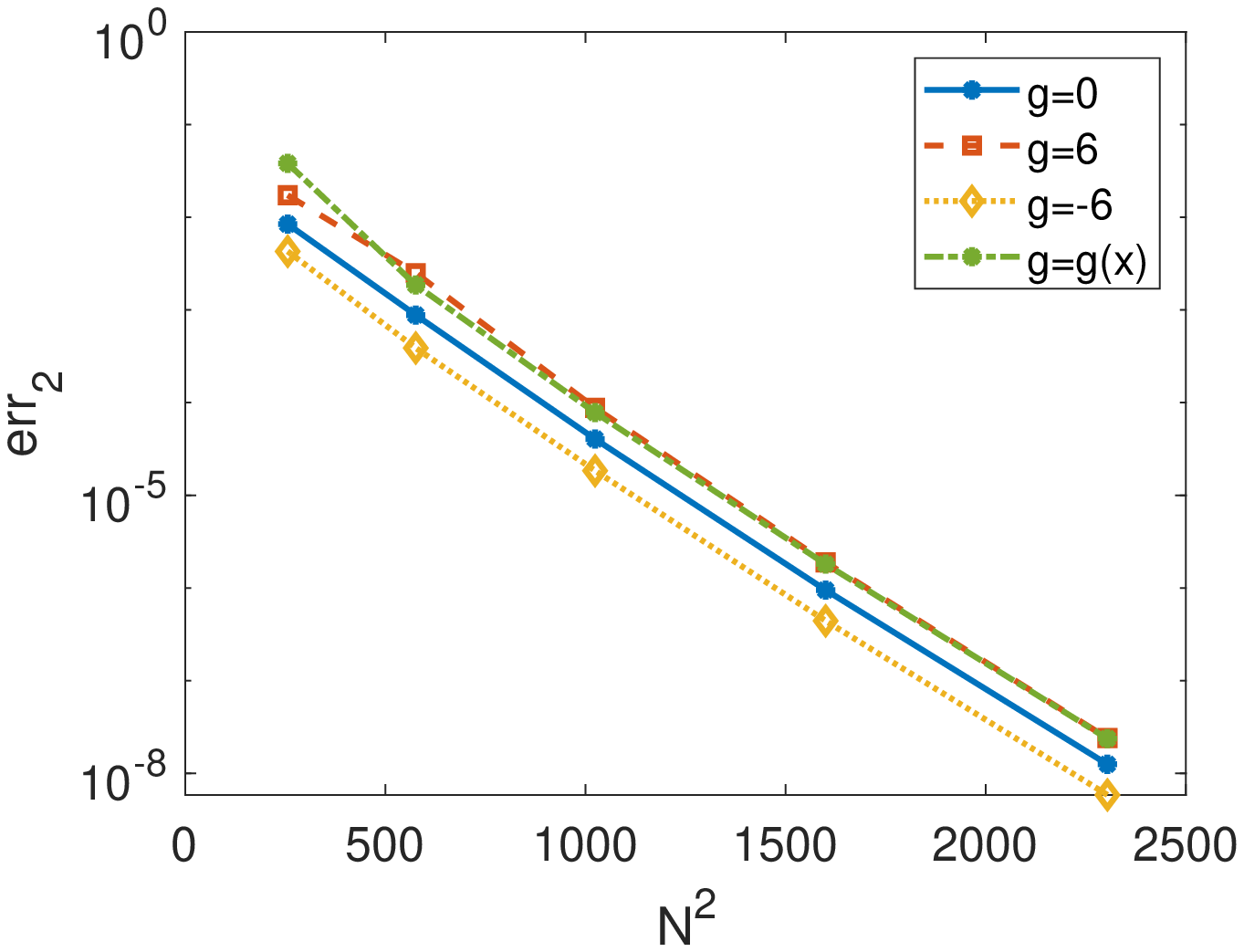}
\includegraphics[scale=.39]{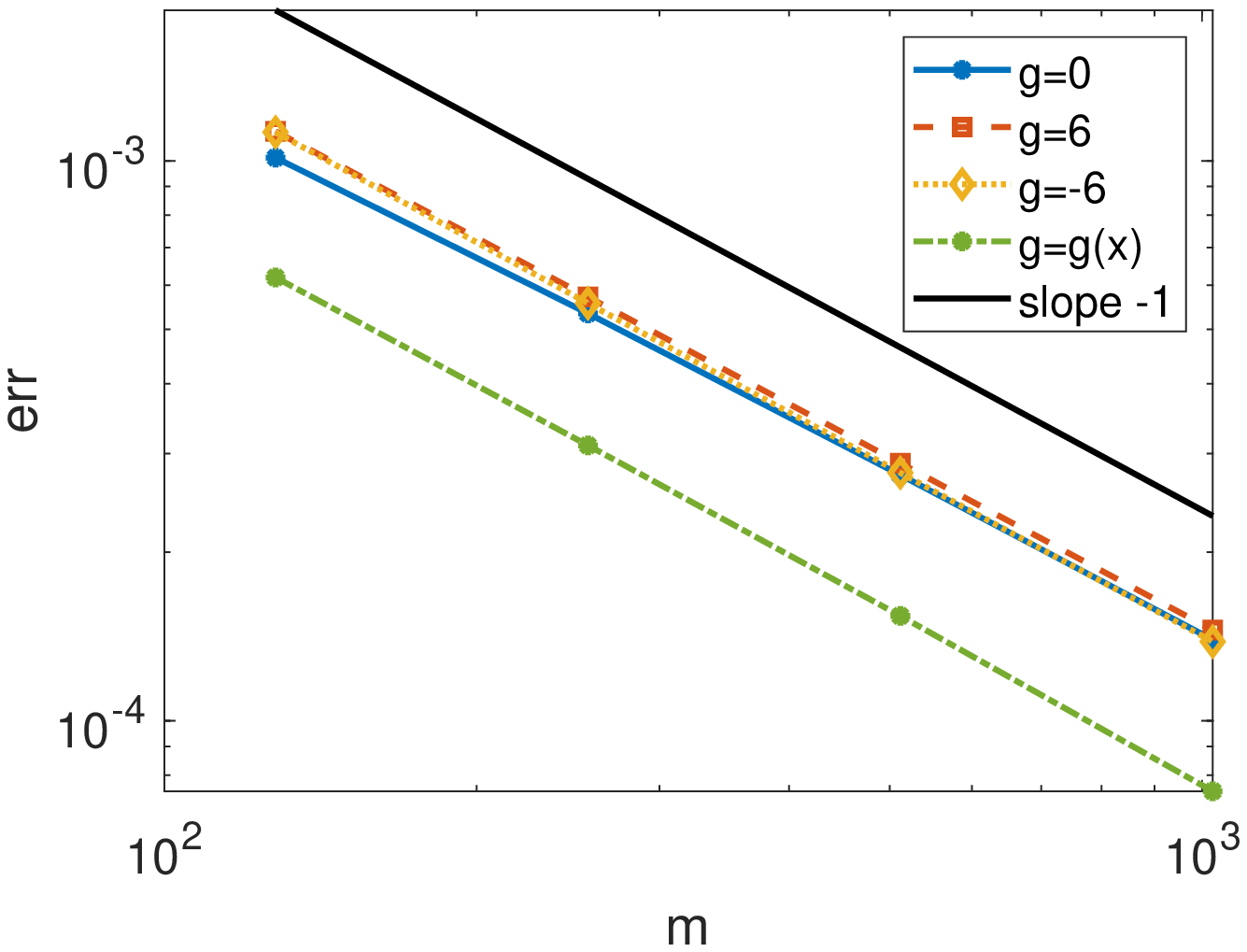}
\caption{In the left picture we plot the relative spatial error for $g=0$, $g=\pm 6$ and $g(x)=\pi\big(1+\cos(\frac{\pi(x+6)}{12})\big)$ for $N\in[16,48]$ at $T=0.5$ with $m =2^{13}$ time steps. Notice that the lines for $g=6$ and $g=g(x)$ are overlapping. In the right we show a double logarithmic plot of the time relative error at $T=0.5$ for different number of time steps $m=2^7,\dots,2^{10}$. The lines for constant $g$ are overlapping.}
\label{fig4}
\end{figure}

In Fig.~\ref{fig4} (right picture) we show the time convergence for $g(x) = 0$, $g(x)=\pm 6$, $g(x)~=~\pi\big(1+\cos(\frac{\pi(x+6)}{12})\big)$ and $N=64$ space grid points. For the case where $g$ is constant, we consider the exact solution. For $g=g(x)$ we compute a reference solution employing $m=2^{14}$ time steps and compare the $\ell^2$ norm evaluated at $T=0.5$ for different $m=2^7,\dots,~2^{10}$. The order is one, as expected. 

\bibliographystyle{siam}

\end{document}